\newtheorem{theo}{Theorem}
\newtheorem{lema}[theo]{Lemma}
\newtheorem{coro}[theo]{Corollary}
\newtheorem{defi}[theo]{Definition}
\newtheorem{assum}[theo]{Assumptions}
\newtheorem{rema}[theo]{Remark}
\newtheorem{exa}[theo]{Example}
\numberwithin{equation}{section}
\numberwithin{theo}{section}
\theoremstyle{definition}
\DeclareMathOperator{\Id}{Id}
\newcommand{\coloneqq}{\mathrel{\mathop:}=}
\newcommand{\diag}{{\rm diag}}
\newcommand{\R}{\mathbb{R}}
\newcommand{\C}{\mathbb{C}}
\newcommand{\N}{\mathbb{N}}
\newcommand{\A}{\mathbb{A}}
\title[A new Gershgorin-type result]{A new Gershgorin-type result for the localisation of the spectrum of matrices}
\author{Anna Dall'Acqua}
\address{Anna Dall'Acqua, Institut f\"ur Analysis, Fakultät für Mathematik und Wirtschafts\-wissenschaften, Universit\"at Ulm, 89081 Ulm, Germany}
\email{anna.dallacqua@uni-ulm.de}
\author{Delio Mugnolo}
\address{Delio Mugnolo, Lehrgebiet Analysis, Fakultät für Mathematik und Informatik, Fern\-{U}niversität Hagen, 58084 Hagen, Germany}
\email{delio.mugnolo@fernuni-hagen.de}
\author{Michael Schelling}
\address{Michael Schelling, Institut f\"ur Analysis, Fakultät für Mathematik und Wirtschafts\-wissenschaften, Universit\"at Ulm, 89081 Ulm, Germany}
\email{michael.schelling@uni-ulm.de}
\subjclass[2010]{15A42, 47A10}
\keywords{Estimates of the spectrum of matrices,  Operator matrices of unbounded operators}
\thanks{The second author was partially supported by the Land Baden--W\"urttemberg in the framework of the \emph{Juniorprofessorenprogramm} -- research project on ``Symmetry methods in quantum graphs''.}
\begin{document}

\maketitle

\begin{abstract}
	We present a Gershgorin's type result on the localisation of the spectrum of a matrix. 
	Our method is elementary and relies upon the method of Schur complements, 
	furthermore it outperforms the one based on the Cassini ovals of Ostrovski and Brauer. 
	Furthermore, it yields estimates that hold without major differences in the cases of both scalar and operator matrices. 
	Several refinements of known results are obtained.
\end{abstract}

\section{Introduction}

Gershgorin proved in~\cite{Ger31} a celebrated estimate for the eigenvalues of a scalar $(n\times n)$ matrix
\begin{equation}\label{eq:gershmatr}
	\A=(A_{ij})_{i,j=1}^n\equiv \begin{pmatrix}
	A_{11} & \ldots & A_{1n}\\
	\vdots & \ddots & \vdots\\
	A_{n1} & \ldots & A_{nn}
	\end{pmatrix}\ .
\end{equation}
His result says that the eigenvalues of $\A$ are contained in the union of the sets
	\[
		\left\{\lambda\in \mathbb C\ \Big\vert\ |\lambda-A_{jj}|\le \hspace{-.35cm} 
		\sum_{\quad i=1, i\ne j}^n \big|A_{ij}\big|\right\},\qquad j=1,\ldots,n\ ,
	\]
which are nowadays called \emph{Gershgorin disks}.
This estimate is rather rough but has nevertheless interesting applications and, 
above all, it can be exploited upon performing only very easy computations. 
Salas has observed in~\cite{Sal99} that Gershgorin's theorem carries over to the case of operator matrices 
i.e., to schemes like that in \eqref{eq:gershmatr} where the entries $A_{ij}$ are not scalars, but rather linear operators. 
%Let us explain what we mean by operator matrix: Consider $X_i$, $i=1,\ldots,n$, complex Banach spaces, the product Banach space $X:=X_1\times \ldots \times X_n$ endowed with the $1$-norm and $A_{ij}$, $1\le i,j\le n$, linear operators from $X_j$ to $X_i$. We call the operator $\A=(A_{ij}):X \to X$ a \emph{$(n \times n)$ operator matrix}. 
For such operators, with the same arguments as in \cite[Thm. 2.7]{Sal99} 
one finds that if namely all $A_{ij}$ are bounded operators (rather than scalars), then
	\[
		\sigma(\A)\subset G_1\cup \ldots \cup G_n,
	\]
where for $j=1, ..,n$
\begin{equation}\label{eq:Gi}
	G_j:=\sigma(A_{jj})\cup \left\{\lambda\in \mathbb C \Big\vert \lambda \not\in \sigma(A_{jj})\hbox{ and }
	\big(\|(\lambda-A_{jj})^{-1}\|\big)^{-1}\le \hspace{-.35cm}\sum_{\quad i=1, i\ne j}^n\big\|A_{ij}\big\| \right\} .
\end{equation}
If the operators $A_{ii}$ on the diagonal are not bounded, but only closed 
-- so that for each $\lambda$ outside the spectrum of $A_{ii}$ the inverse $(\lambda-A_{ii})^{-1}$  is still bounded -- 
then the same arguments work and one can see that the above mentioned result still holds in this more general case. 
However, the case of \emph{off-diagonal} unbounded entries is subtler and is the case we are interested in. 
More precisely, we consider $(n \times n)$ operator matrices with unbounded and closed elements in the diagonal 
while the off-diagonal elements are relatively bounded. 
The main idea to describe the spectrum of such matrices is to use the tool that in linear algebra usually goes under 
the name of \emph{Schur complement}.  
%The extension to this idea to $(n \times n)$ matrices is the main idea for the generalisation of Salas's result to unbounded operators. 
We refer to~\cite{Zha05} for a comprehensive treatment of Schur complements. 

Our main result, which we will present in Section~\ref{sec:main}, 
reads as follows in the special case of $(2 \times 2)$ operator matrices. 
A similar idea has been used in~\cite{Nag89} to obtain a different characterisation.
\begin{theo}[The $(2 \times 2)$ case]\label{theo:2by2}
	Let $X_1,X_2$ be complex Banach spaces and consider the product Banach space $X\coloneqq X_1\times X_2$. 
	Let $A: \mathcal{D}(A) \subset X_1 \to X_1$ and $D: \mathcal{D}(D) \subset X_2 \to X_2$ be closed, 
	$B: \mathcal{D}(B) \subset X_2 \to X_1$ be relatively $D$-bounded and $C: \mathcal{D}(C) \subset X_1 \to X_2$ be relatively $A$-bounded. 
	Consider the operator matrix
	\begin{align*}
		\A=\begin{pmatrix}
		A & B\\ C & D
		\end{pmatrix}: \mathcal{D}(A)\times\mathcal{D}(D) \subset X \rightarrow X\ ,
	\end{align*}
	and assume that $\A$ is closed on $\mathcal{D}(A)\times\mathcal{D}(D)$.\newline
	Then,
		$$\sigma(\A) \subset \sigma(A)\cup \sigma(D) \cup \{ \lambda \in \C\setminus (\sigma(A)\cup \sigma(D)): 
		\mathcal{R}_{21}(\lambda) \geq 1\} =: S_{21}(\A)\, , $$
	where
		$$\mathcal{R}_{21}(\lambda) : = \| B (\lambda -D)^{-1} C (\lambda -A)^{-1}\| \, .$$
	%\textcolor{red}{If in particular $\mathbb A$ is triangular, then its spectrum is included in the union of its diagonal entries' spectrum.}
\end{theo}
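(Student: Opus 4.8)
The plan is to show that if $\lambda \notin \sigma(A) \cup \sigma(D)$ and $\mathcal{R}_{21}(\lambda) < 1$, then $\lambda \notin \sigma(\A)$, i.e.\ $\lambda - \A$ is boundedly invertible on $X$. The natural tool is the factorization of $\lambda - \A$ via the Schur complement with respect to the $(1,1)$ block. Formally, writing $\lambda - \A = \begin{pmatrix} \lambda - A & -B \\ -C & \lambda - D \end{pmatrix}$, one expects the block $LU$-type decomposition
\begin{equation*}
\lambda - \A = \begin{pmatrix} \Id & 0 \\ -C(\lambda-A)^{-1} & \Id \end{pmatrix} \begin{pmatrix} \lambda - A & -B \\ 0 & S(\lambda) \end{pmatrix},
\end{equation*}
where $S(\lambda) := (\lambda - D) - C(\lambda - A)^{-1}B$ is the Schur complement. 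Since $(\lambda - A)^{-1}$ is bounded and $C$ is $A$-bounded, $C(\lambda-A)^{-1}$ is bounded, so the first (lower-triangular) factor is an invertible bounded operator on $X$ with bounded inverse. Hence $\lambda - \A$ is invertible if and only if the second (upper-triangular) factor is, and since $\lambda - A$ is invertible, this reduces to invertibility of $S(\lambda)$ on $X_2$.

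Next I would analyze $S(\lambda)$. Factor it as
\begin{equation*}
S(\lambda) = \bigl(\Id - C(\lambda-A)^{-1}B(\lambda-D)^{-1}\bigr)(\lambda - D),
\end{equation*}
which is legitimate on $\mathcal{D}(D)$ because $B(\lambda-D)^{-1}$ maps into $X_1$ boundedly (as $B$ is $D$-bounded and $(\lambda-D)^{-1}$ is bounded), hence into $\mathcal{D}(C)$ after applying $(\lambda-A)^{-1}$, so all compositions make sense and the operator $C(\lambda-A)^{-1}B(\lambda-D)^{-1}$ is a \emph{bounded} operator on $X_2$. Now $(\lambda - D)$ is invertible by assumption, so it suffices to show $\Id - C(\lambda-A)^{-1}B(\lambda-D)^{-1}$ is invertible on $X_2$. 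Here I would invoke the Neumann series: this operator is invertible as soon as $\|C(\lambda-A)^{-1}B(\lambda-D)^{-1}\| < 1$. The subtle point is that the quantity appearing in the theorem is $\mathcal{R}_{21}(\lambda) = \|B(\lambda-D)^{-1}C(\lambda-A)^{-1}\|$, with the factors in the \emph{other} order. But $C(\lambda-A)^{-1}B(\lambda-D)^{-1}$ and $B(\lambda-D)^{-1}C(\lambda-A)^{-1}$ have the same nonzero spectrum (the standard fact that $\sigma(ST)\setminus\{0\} = \sigma(TS)\setminus\{0\}$ for bounded $S,T$), so $1 \notin \sigma\bigl(B(\lambda-D)^{-1}C(\lambda-A)^{-1}\bigr)$ — which is implied by $\mathcal{R}_{21}(\lambda) < 1$ — gives $1 \notin \sigma\bigl(C(\lambda-A)^{-1}B(\lambda-D)^{-1}\bigr)$, hence the desired invertibility. (One can equally well use the Schur complement with respect to the $(2,2)$ block to get $C(\lambda-A)^{-1}$ and $B(\lambda-D)^{-1}$ directly in that order; choosing the right block is what produces exactly $\mathcal{R}_{21}$.)

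Finally I would assemble the inverse explicitly: $(\lambda-\A)^{-1}$ is the product of the inverses of the two triangular factors, each of which is bounded, so $(\lambda - \A)^{-1}$ is bounded and defined on all of $X$; combined with the hypothesis that $\A$ is closed (so $\lambda - \A$ is closed and a bounded everywhere-defined inverse indeed certifies $\lambda \in \rho(\A)$), this shows $\lambda \notin \sigma(\A)$. Taking the contrapositive and the union over the excluded sets yields $\sigma(\A) \subset S_{21}(\A)$. The main obstacle I anticipate is not the Neumann-series estimate itself but the \emph{domain bookkeeping}: one must check carefully that the formal block factorization is an honest identity of operators with the correct domains — in particular that $C(\lambda-A)^{-1}B(\lambda-D)^{-1}$ really extends to a bounded operator on all of $X_2$ and that the triangular factors are bijections of $X$ respecting $\mathcal{D}(A)\times\mathcal{D}(D)$ — and that the relative boundedness hypotheses are exactly what is needed to push this through. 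Once the factorization is justified on the right domains, the spectral conclusion is immediate.
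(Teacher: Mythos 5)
Your proof is correct, and it is a genuine (if mild) variant of the paper's argument: both rest on a triangular Frobenius--Schur factorisation of $\lambda-\A$ followed by a Neumann-series criterion, but you take the Schur complement with respect to the \emph{other} diagonal block. The paper's Schur Lemma (Lemma~\ref{lem:schur}) eliminates the second component, works with $\Delta_\lambda=\lambda-A-B(\lambda-D)^{-1}C$ on $X_1$, and factors it as $\bigl(\Id_{X_1}-B(\lambda-D)^{-1}C(\lambda-A)^{-1}\bigr)(\lambda-A)$, so the operator whose norm is exactly $\mathcal{R}_{21}(\lambda)$ appears directly and the plain Neumann series finishes the proof. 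You instead work with $S(\lambda)=\lambda-D-C(\lambda-A)^{-1}B$ on $X_2$, land on $\Id_{X_2}-C(\lambda-A)^{-1}B(\lambda-D)^{-1}$, and must therefore invoke the identity $\sigma(ST)\setminus\{0\}=\sigma(TS)\setminus\{0\}$ (equivalently, $(\Id-ST)^{-1}=\Id+S(\Id-TS)^{-1}T$) to convert $\mathcal{R}_{21}(\lambda)<1$ into the required invertibility; that step is standard and valid for bounded $S\colon X_1\to X_2$, $T\colon X_2\to X_1$, so nothing is lost, but it is an extra ingredient that the paper avoids simply by choosing the complement with respect to $D$ -- as your own parenthetical acknowledges. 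What your detour buys is that it makes visible, with no extra work, that the same inclusion holds with $\mathcal{R}_{12}(\lambda)=\|C(\lambda-A)^{-1}B(\lambda-D)^{-1}\|$ in place of $\mathcal{R}_{21}(\lambda)$, a refinement the paper records separately (the inclusion $\sigma(\A)\subset S_{12}(\A)$ stated before Definition~\ref{defi:Aperm}). Your domain bookkeeping concerns are the right ones and they do resolve: relative boundedness gives that $C(\lambda-A)^{-1}$, $B(\lambda-D)^{-1}$, and hence $K\coloneqq C(\lambda-A)^{-1}B(\lambda-D)^{-1}$ are bounded on the full spaces, and $S(\lambda)^{-1}=(\lambda-D)^{-1}(\Id_{X_2}-K)^{-1}$ maps $X_2$ into $\mathcal{D}(D)\subset\mathcal{D}(B)$ with $BS(\lambda)^{-1}=B(\lambda-D)^{-1}(\Id_{X_2}-K)^{-1}$ bounded, so the explicit inverse of your upper-triangular factor is indeed a bounded everywhere-defined operator.
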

Throughout this article we call $S_{21}(\A)$ a \emph{Schur set}.

We are going to prove Theorem~\ref{theo:2by2} in Section~\ref{sec:2times2}. 
The extension to the case of general $(n\times n)$ operator matrices is less trivial than one may imagine.
To this aim, we are going to treat a generic $(n \times n)$ operator matrix as a $(2 \times 2)$ block operator matrix, 
with the upper-left block being a $\bigl((n-1)\times (n-1)\bigr)$ matrix and the lower-right block a $(1\times 1)$ matrix.

Further estimates of the eigenvalues of a matrix are known. 
In particular, it is known since~\cite{Ost37,Bra47} that the spectrum of a scalar matrix $\A=(A_{ij})_{i,j=1}^n$ 
is contained in the union of the so-called \emph{Cassini ovals}
	\[
		\left\{\lambda\in \mathbb C \ \Big\vert\ |\lambda-A_{ii}|\cdot |\lambda-A_{jj}|\le \Big(\hspace{-.35cm}
		\sum_{ \quad k=1, k \ne i}^n |A_{ik}|\Big)\Big(\hspace{-.35cm}\sum_{\quad k=1, k \ne j}^n |A_{jk}|\Big) \right\}\, .
	\] 
This estimate is known to be strictly sharper than Gershgorin's, cf.\ the interesting survey in~\cite{BunWeb12}. 
It, too, can be partially extended to general operator matrices of bounded linear operators.
This has been done in~\cite[\S~5]{HerSch07}. 
So far, Cassini-type inclusions have been proved merely for the \emph{approximate point spectrum} of such operator matrices. 
With our method and under suitable assumptions, we can prove that the whole spectrum of an operator matrix is contained 
in the Cassini ovals, cf.~Theorem \ref{theo:nagel-implies-cassini}.

The paper is organised as follows. 
In Section 2 we study the case $n=2$ and prove Theorem \ref{theo:2by2}. 
The notation for the general case is given in Section 3 while Section 4 contains the main results of the papers: 
the generalisation of Theorem \ref{theo:2by2} to $(n \times n)$ operator matrices (Theorem \ref{theo:nagel-implies-gershgorin}) 
and the fact that the whole spectrum is contained in the Cassini ovals, cf.~Theorem \ref{theo:nagel-implies-cassini}. 
In Section 5 we describe some situations in which our main results hold. 
In Subsection 5.1 we define the \emph{modified Schur sets}: 
these have the advantage that they contain the whole spectrum of the operator matrix under milder assumptions. 
In Subsection 5.2 we present a set of assumptions for the off-diagonal entries of the matrix operator 
which assures that the operator matrix and all its upper-left square blocks are closed. 
In Section 6 we consider the case of scalar matrices and present two examples showing how our own estimate of the spectrum is 
strictly sharper than the one given by the method based on Cassini ovals.

\section{Schur's Lemma and the $(2 \times 2)$ matrix case}\label{sec:2times2}

%Nagel in~\cite{Nag89} uses the tool that in linear algebra usually goes under the name of \emph{Schur complement} to characterise the spectrum of a $(2 \times 2)$ operator matrix. The extension to this idea to $(n \times n)$ matrices is the main idea for the generalisation of Salas's result to unbounded operators. 

%We will see that we can treat operator matrices with closed and unbounded operators in the diagonal, while the off-diagonal elements are allowed to be relatively bounded. 

For convenience we start by recalling some known facts and definitions. 
Let $X$, $Y$ be complex Banach-spaces. 
An operator $A:\mathcal{D}(A)\subset X\rightarrow X$ is called \emph{closed} 
if its domain $\mathcal{D}(A)$ is a Banach space when endowed with the \emph{graph norm}
	$$ \|x\|_{\mathcal{D}(A)}\coloneqq\|x\|_{X}+\|A x\|_{X}\, , \qquad x\in\mathcal{D}(A) \, .$$
An operator $C: \mathcal{D}(C)\subset X\rightarrow Y$ is called \emph{relatively $A$-bounded} 
if $\mathcal{D}(A) \subset \mathcal{D}(C)$ and there exist $\alpha,\beta \geq 0$ such that
	$$ \| Cx\|_{Y} \leq \alpha \| x \|_{X}+ \beta \| Ax\|_{X} \mbox{ for all }x \in \mathcal{D}(A) \, .$$ 
If in particular $A$ has non-empty resolvent set, then $C$ is relatively $A$-bounded 
if  and only if $C(\lambda-A)^{-1}: X \to Y$ is bounded for one (and thus all) 
$\lambda\not\in\sigma(\A)$, cf.~\cite[Exer.\ III.2.18.1]{EngNag00}.

The following result is a small extension of~\cite[Thm.~2.4]{Nag89}. 

\begin{lema}[Schur's Lemma]\label{lem:schur}
	Let $X_1,X_2$ be complex Banach spaces and consider the product Banach space $X\coloneqq X_1\times X_2$ endowed with the 1-norm. 
	Let $A: \mathcal{D}(A) \subset X_1 \to X_1$ and $D: \mathcal{D}(D) \subset X_2 \to X_2$ be closed, 
	$B: \mathcal{D}(B) \subset X_2 \to X_1$ be relatively $D$-bounded and $C: \mathcal{D}(C) \subset X_1 \to X_2$ be relatively $A$-bounded. 
	Consider the operator matrix 
	\begin{align*}
		\A=\begin{pmatrix}
			A & B\\ C & D
		\end{pmatrix}
		: \mathcal{D}(A)\times\mathcal{D}(D) \subset X \rightarrow X\ ,
	\end{align*}
	and assume that $\A$ is closed on $\mathcal{D}(A)\times\mathcal{D}(D)$.	\\
	For $\lambda\not\in\sigma(D)$ the following statements are equivalent:
	\begin{enumerate}[(i)]
		\item $\lambda\not\in\sigma(\A)$,
		\item $\Delta_\lambda$ has a bounded inverse,
	\end{enumerate}
	where $\Delta_{\lambda}:\mathcal{D}(A) \subset X_1 \to X_1 $ is given by $\Delta_\lambda\coloneqq\lambda-A-B(\lambda-D)^{-1}C$. 
	In this case the resolvent of $\A$ is given by
	\begin{align}\label{eq:inv}
  	(\lambda-\A)^{-1}=\begin{pmatrix} \Delta_\lambda^{-1} & \Delta_\lambda^{-1}B(\lambda-D)^{-1} \\
    (\lambda-D)^{-1}C\Delta_\lambda^{-1} & (\lambda-D)^{-1}\lbrack \Id_{X_2}+C\Delta_\lambda^{-1}B(\lambda-D)^{-1}\rbrack \end{pmatrix} \, .
  \end{align}
\end{lema}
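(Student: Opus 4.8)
The plan is to establish Schur's Lemma by direct computation, following the classical Schur-complement philosophy: for $\lambda \notin \sigma(D)$ we factor the operator $\lambda - \A$ into a product of block-triangular operators whose invertibility is transparent, and then read off both the equivalence (i) $\Leftrightarrow$ (ii) and the resolvent formula \eqref{eq:inv}. Concretely, on $\mathcal{D}(A) \times \mathcal{D}(D)$ one has the formal identity
\begin{align*}
	\lambda - \A = \begin{pmatrix} \Id_{X_1} & -B(\lambda-D)^{-1} \\ 0 & \Id_{X_2} \end{pmatrix}
	\begin{pmatrix} \Delta_\lambda & 0 \\ 0 & \lambda - D \end{pmatrix}
	\begin{pmatrix} \Id_{X_1} & 0 \\ -(\lambda-D)^{-1}C & \Id_{X_2} \end{pmatrix},
\end{align*}
where the outer two factors are bounded and boundedly invertible on $X$ (their inverses are obtained simply by flipping the sign of the off-diagonal block, using that $B(\lambda-D)^{-1}$ and $(\lambda-D)^{-1}C$ are bounded by the relative boundedness hypotheses together with $\lambda \notin \sigma(D)$ and $A$ having nonempty resolvent set). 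Since the middle factor is invertible on $X$ exactly when $\Delta_\lambda$ is invertible on $X_1$ (the block $\lambda - D$ being already invertible), the equivalence follows, and multiplying the three inverses in reverse order yields precisely \eqref{eq:inv}.

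The first step I would carry out is the careful bookkeeping of domains: I need to check that each factor above maps the relevant domain into the next one, so that the composition is legitimate as an identity of unbounded operators on $\mathcal{D}(A)\times\mathcal{D}(D)$ and not merely a formal manipulation. The right factor maps $\mathcal{D}(A)\times\mathcal{D}(D)$ bijectively onto itself (since $(\lambda-D)^{-1}C$ sends $\mathcal{D}(A)$ into $\mathcal{D}(D)$); the middle factor has domain $\mathcal{D}(A)\times\mathcal{D}(D)$ and maps into $X_1 \times X_2 = X$; and the left factor is everywhere defined and bounded, where one uses $B(\lambda-D)^{-1}\colon X_2 \to X_1$ bounded. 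Then I verify, by a short computation on a generic vector $(x,y) \in \mathcal{D}(A)\times\mathcal{D}(D)$, that the product of the three operators really equals $\lambda - \A$; this is just expanding $\lambda x - Ax - By$ and $\lambda y - Cx - Dy$ and regrouping.

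Next I would handle the two directions of the equivalence using this factorization. If $\Delta_\lambda$ has a bounded inverse, then the middle factor has the bounded inverse $\operatorname{diag}(\Delta_\lambda^{-1}, (\lambda-D)^{-1})$ on $X$, so $\lambda - \A$ is a product of three boundedly invertible operators, hence boundedly invertible, giving (ii) $\Rightarrow$ (i); computing the product of the inverses (in reversed order) produces the stated resolvent matrix \eqref{eq:inv}, which is the bulk of the routine algebra. Conversely, if $\lambda \notin \sigma(\A)$, then $\Delta_\lambda = \Pi_1 (\lambda - \A) \iota_1$ in an appropriate sense — more cleanly, inverting the factorization shows $\operatorname{diag}(\Delta_\lambda, \lambda-D)$ is boundedly invertible on $X$, and restricting to the first component (using that $\lambda - D$ is already known invertible) forces $\Delta_\lambda$ to have a bounded inverse. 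One should note $\Delta_\lambda$ is closed because $A$ is closed and $B(\lambda-D)^{-1}C$ is $A$-bounded (indeed $C$ is $A$-bounded and $B(\lambda-D)^{-1}$ is bounded), so "bounded inverse" is equivalent to "bijective" here.

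The main obstacle is not any deep idea but the domain accounting: one must be scrupulous that the closedness hypothesis on $\A$ is actually used where needed and that every operator composition in the factorization is defined on the right domain, since with unbounded $A$ and $D$ the naive matrix algebra can fail. In particular, proving that $\Delta_\lambda$ closed-and-bijective implies $\Delta_\lambda^{-1}$ bounded requires the closed graph theorem applied on the Banach space $\mathcal{D}(A)$, and the closedness of $\A$ (equivalently, of $\Delta_\lambda$) is what makes this step rigorous; this is exactly the "small extension" of \cite[Thm.~2.4]{Nag89} alluded to, where the scalar or bounded-block setting would make it automatic.
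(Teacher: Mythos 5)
Your factorization is not a different route from the paper's: the paper writes $\lambda-\A=R_\lambda\circ L_\lambda$ with
\begin{align*}
R_\lambda=\begin{pmatrix} \Id_{X_1} & -B(\lambda-D)^{-1} \\ 0 & \Id_{X_2}\end{pmatrix},
\qquad
L_\lambda=\begin{pmatrix} \Delta_\lambda & 0 \\ -C & \lambda-D\end{pmatrix},
\end{align*}
and your middle and right factors multiply out exactly to $L_\lambda$. The substance of the argument (reduce invertibility of $\lambda-\A$ to invertibility of $\Delta_\lambda$, then multiply inverses to get \eqref{eq:inv}) is the same, and your attention to domain bookkeeping and to the closed-graph step is appropriate.

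There is, however, one claim in your write-up that is wrong and that your argument leans on: you assert that \emph{both} outer factors are ``bounded and boundedly invertible on $X$'', justifying this by saying that $(\lambda-D)^{-1}C$ is bounded ``by the relative boundedness hypotheses''. Relative $A$-boundedness of $C$ gives boundedness of $C(\mu-A)^{-1}$ for $\mu\notin\sigma(A)$, not of $C$ itself; when $A$ is unbounded, $C$ and hence $(\lambda-D)^{-1}C$ are in general unbounded, so your lower unitriangular factor is \emph{not} a bounded operator on $X$. This is precisely why the paper keeps $-C$ glued to the diagonal inside $L_\lambda$ rather than splitting it off. The consequence ``$\lambda-\A$ is a product of three boundedly invertible operators, hence boundedly invertible'' therefore does not stand as written. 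The gap is repairable, and partly repaired by your own text: what the equivalence actually needs is that the right factor is a bijection of $\mathcal{D}(A)\times\mathcal{D}(D)$ onto itself (which you correctly observe), so that bijectivity of $\lambda-\A$ from $\mathcal{D}(A)\times\mathcal{D}(D)$ onto $X$ is equivalent to bijectivity of the middle factor, i.e.\ of $\Delta_\lambda$. For the resolvent formula one must then check separately that the entry $(\lambda-D)^{-1}C\Delta_\lambda^{-1}$ is bounded even though $(\lambda-D)^{-1}C$ is not: this follows because $\Delta_\lambda^{-1}$ is bounded from $X_1$ into $\big(\mathcal{D}(A),\|\cdot\|_{\mathcal{D}(A)}\big)$ (open mapping theorem, using the closedness hypotheses) and $C$ is $A$-bounded. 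With the boundedness claim for the right factor deleted and this one additional estimate supplied, your proof is correct and coincides in essence with the paper's.
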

\begin{proof}
	For $\lambda\not\in\sigma(D)$ we consider on $\mathcal{D}(A)\times\mathcal{D}(D)$ the decomposition
	\begin{align*}
		\lambda-\A&=\begin{pmatrix} \lambda-A & -B \\ -C & \lambda-D \end{pmatrix} = 
			\begin{pmatrix} \Id_{X_1} & -B(\lambda-D)^{-1} \\ 0 & \Id_{X_2} \end{pmatrix}	
			\begin{pmatrix} \Delta_{\lambda} %\lambda-A-B(\lambda-D)^{-1}C 
		& 0 \\ -C & \lambda-D\end{pmatrix}	\\
		&=:R_\lambda\circ L_\lambda.
	\end{align*}
	The operator $R_\lambda$ has a bounded inverse, while, since $\lambda \not \in \sigma(D)$,  
	$L_\lambda$ has a bounded inverse if and only if the same holds for $\Delta_\lambda$.
	In this case the inverse of $\lambda-\mathbb A$ is
	\begin{align*}
		(\lambda-\A)^{-1}&=L_\lambda^{-1}\circ R_\lambda^{-1}\\
		&=\begin{pmatrix} \Delta_\lambda^{-1} & 0 \\ (\lambda-D)^{-1}C\Delta_\lambda^{-1}  & (\lambda-D)^{-1}\end{pmatrix}
			\begin{pmatrix} \Id_{X_1} & B(\lambda-D)^{-1} \\ 0 & \Id_{X_2} \end{pmatrix}\\
		&=\begin{pmatrix} \Delta_\lambda^{-1} & \Delta_\lambda^{-1}B(\lambda-D)^{-1} \\
       (\lambda-D)^{-1}C\Delta_\lambda^{-1} & (\lambda-D)^{-1}\lbrack C\Delta_\lambda^{-1}B(\lambda-D)^{-1}+\Id_{X_2}\rbrack \end{pmatrix}
	\end{align*}
	as we wanted to prove.
\end{proof}

It is clear from the proof that a similar statement is valid considering the Schur complement with respect to $A$ instead of $D$.

We are now ready to prove Theorem  \ref{theo:2by2}.

\begin{proof}[Proof of Theorem \ref{theo:2by2}]
	Let us consider $\lambda \in \C \setminus \big(\sigma(A)\cup \sigma(D)\big)$ such that $\mathcal{R}_{21}(\lambda)<1$. 
	Since $\lambda \not \in \sigma(D)$, Schur's Lemma~\ref{lem:schur} gives that $\lambda\not\in \sigma(\A)$ 
	if and only if the operator $\Delta_{\lambda}=\lambda-A-B(\lambda-D)^{-1}C$ has a bounded inverse. 
	This is actually the case. 
	Indeed, since $\lambda \not \in \sigma(A)$ we may write
		$$\Delta_{\lambda}= \Big(\Id_{X_1}- B (\lambda -D)^{-1} C (\lambda -A)^{-1}  \Big) (\lambda - A) \,$$
	on $\mathcal{D}(A)$. 
	The invertibility of $\Delta_{\lambda}$ and the fact that the inverse is bounded follows by the Neumann series criterion 
	and $\mathcal{R}_{21}(\lambda)<1$.
\end{proof}

\section{Setting}

In the following we will always assume, without recalling it, that $n \in \N$ and $n\geq 2$.
Furthermore let $X_1,\ldots,X_n$ be complex Banach spaces and $X\coloneqq X_1\times\ldots\times X_n$. 
Of course, all $\ell^p$-norms on the product space $X$ are equivalent, but we will focus on the $1$-norm, 
i.e., we will always tacitly take
\begin{eqnarray*}
	% &\|x\|_\infty\coloneqq\underset{i=1,\ldots,n}{\max}\|x_i\|_{X_i}\qquad &\hbox{for } x=(x_1,\ldots,x_n)\in X\ , \\
  &\|x\|\coloneqq\sum_{i=1}^n \|x_i\|_{X_i}\, , \qquad & \hbox{for } x=(x_1,\ldots,x_n)\in X\ .
\end{eqnarray*}

In the rest of the work we impose the following.
\begin{assum}\label{Ass1}
	For $i,j \in \{ 1\, .., n\}$, $A_{ij}:\mathcal{D}(A_{ij})\subset X_j\rightarrow X_i$ are linear operators 
	such that $A_{ii}$ are closed on $\mathcal{D}(A_{ii})$ and $A_{ij}$, for $i \ne j$, is relatively $A_{jj}$-bounded. 
	The associated operator matrix
		$$\A\coloneqq (A_{ij})_{i,j=1}^n:\mathcal{D}(\A) \subset X\rightarrow X \, ,$$
	is defined on $\mathcal{D}(\A):=  \mathcal{D}(A_{11}) \times .... \times  \mathcal{D}(A_{nn})$. %, is closed on $\mathcal{D}(\A)$.
\end{assum}

As usual, we denote by $\|\A\|$ the operator norm of $\A$, for $\A$ as in Assumption \ref{Ass1}. 
This norm depends on the norm that we have choosen on the product space $X$.
\begin{lema}\label{lem:sums}
	Let $\A$ be an operator matrix as in Assumption \ref{Ass1} acting on $(X,\|\cdot\|)$ and denote by $\|\A\|_{1}$ its operator norm. 
	Then,
  \begin{align*}
   		\|\A\|_{1}\leq\underset{j=1,\ldots,n}{\max}\sum_{i=1}^n\|A_{ij}\|.
  \end{align*}
\end{lema}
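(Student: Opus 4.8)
The plan is to estimate $\|\A x\|$ directly for an arbitrary element $x=(x_1,\ldots,x_n)$ of the domain $\mathcal{D}(\A)=\mathcal{D}(A_{11})\times\cdots\times\mathcal{D}(A_{nn})$ and then to take the supremum over all such $x$ with $\|x\|=1$. We may assume that the right-hand side $M:=\max_{j=1,\ldots,n}\sum_{i=1}^n\|A_{ij}\|$ is finite, for otherwise there is nothing to prove; in that case every entry $A_{ij}$ is in particular a bounded operator. Before estimating, I would observe that $\A x$ is well defined componentwise: since $A_{ij}$ is relatively $A_{jj}$-bounded for $i\ne j$ we have $\mathcal{D}(A_{jj})\subset\mathcal{D}(A_{ij})$, so $x_j$ lies in the domain of every operator $A_{ij}$ occurring in the $i$-th row, and hence $(\A x)_i=\sum_{j=1}^n A_{ij}x_j$ makes sense for each $i\in\{1,\ldots,n\}$ (for the diagonal term $i=j$ there is nothing to check).

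Then, applying the triangle inequality in each $X_i$, the bound $\|A_{ij}x_j\|_{X_i}\le\|A_{ij}\|\,\|x_j\|_{X_j}$, the definition of the $1$-norm on $X$, and finally interchanging the order of summation, I would obtain
\begin{align*}
	\|\A x\|&=\sum_{i=1}^n\Bigl\|\sum_{j=1}^n A_{ij}x_j\Bigr\|_{X_i}\le\sum_{i=1}^n\sum_{j=1}^n\|A_{ij}\|\,\|x_j\|_{X_j}\\
	&=\sum_{j=1}^n\Bigl(\sum_{i=1}^n\|A_{ij}\|\Bigr)\|x_j\|_{X_j}\le M\sum_{j=1}^n\|x_j\|_{X_j}=M\,\|x\|\,.
\end{align*}
Dividing by $\|x\|$ for $x\ne 0$ and passing to the supremum yields $\|\A\|_1\le M$, which is the claim.

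I do not expect a genuine obstacle here: this is precisely the classical computation identifying the operator norm on an $\ell^1$-type product space with the maximum absolute column sum. The only two points deserving a word of care — both immediate — are that the componentwise products $A_{ij}x_j$ are well defined, which follows from the relative boundedness assumption through the domain inclusion $\mathcal{D}(A_{jj})\subset\mathcal{D}(A_{ij})$, and the reduction to bounded entries, which is legitimate since the asserted inequality is vacuous as soon as its right-hand side is infinite.
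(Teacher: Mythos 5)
Your argument is correct and is exactly the ``direct computation'' that the paper invokes without writing out: triangle inequality in each component, interchange of the order of summation, and bounding each column sum by the maximum. The two preliminary remarks (well-definedness of $A_{ij}x_j$ via $\mathcal{D}(A_{jj})\subset\mathcal{D}(A_{ij})$, and the vacuity of the claim when the right-hand side is infinite) are appropriate and cost nothing.
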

The proof is a direct computation. 
If all Banach spaces $X_i$ are one-dimensional, %then it is known that 
the inequality in Lemma \ref{lem:sums} is actually an equality, see e.g.~\cite[Aufgabe 114.4]{Heu04}.

Our spectral localisation result will exploit the following Gershgorin-type sets.

\begin{defi}\label{defi:S_i}
	%Let $n\in \mathbb N$, $n\ge 2$. 
	Let $\A=(A_{ij})$ be an $(n\times n)$ operator matrix. 
	Consider for $1\le j,k\le n$, $j \ne k$, the \emph{Schur sets}
		\[
			S_{kj}(\A)\coloneqq\sigma(A_{kk})\cup\sigma(A_{jj})\cup
					\left\{\lambda\in\C\setminus\big(\sigma(A_{kk})\cup\sigma(A_{jj})\big)\ |\ {\mathcal R}_{kj}(\lambda) \geq 1\right\}\ ,
		\]
	where
	\begin{align}\label{eq:Rkj}
		{\mathcal R}_{kj}(\lambda) & :=\hspace{-.35cm}
		%\big\|A_{jk}(\lambda-A_{kk})^{-1}A_{kj}(\lambda-A_{jj})^{-1}\big\| \\
		%& \quad +
		\sum_{\quad i=1, i\not=k}^{n} \Big\|\Big(A_{ik}(\lambda-A_{kk})^{-1}A_{kj}+ (1-\delta_{ij})A_{ij}\Big)(\lambda-A_{jj})^{-1} \Big\|\ ,
	\end{align}
	where $\delta_{ij}$ denotes the Kronecker-Delta, as well as
		\[
			S_k(\A)\coloneqq\hspace{-.35cm}\bigcup_{\quad j=1, j\not=k}^n	S_{kj} (\A)\ .
		\]
\end{defi}

Observe that in general $\mathcal R_{kj}(\lambda)\neq \mathcal R_{jk}(\lambda)$ even if $\A$ is self-adjoint.

%%%%%%%%%%%%%%%%%%%%%%%%%%%%%%%%%%%%%%%%%%%%%%%%%%%%%%%%%%%%%%%%%%

\subsection{Relative boundedness of an operator vector}

In this work we are going to prove that via Schur's Lemma we can describe the spectrum of an arbitrarily large (finite) operator matrix.
Lemma~\ref{lem:schur} is formulated for $(2\times 2)$ operator matrices. 
Of course, one may apply it recursively by regarding the entries as operator matrices in their own right, 
but this would lead to less sharp estimates and, furthermore, 
it would force to impose unnatural relative boundedness %or closedness 
conditions on such operator matrices, rather than on the elementary building blocks we are interested in. 
To begin with, we introduce a notation that better fits our framework.
 
\begin{defi}\label{defi:minor}
	Let $k\in \mathbb N$, $1\le k\le n$. 
	Let $\A=(A_{ij})$ be an $(n\times n)$ operator matrix satisfying Assumption \ref{Ass1}. 
	We denote by $\A_k$ its \emph{upper-left block} $\A_k\coloneqq(A_{ij})_{i,j=1}^k$, 
		\[
			\A_k:\mathcal{D}(\A_k) \subset X_1\times\ldots\times X_k \rightarrow X_1\times\ldots\times X_k \ ,
		\]
	with $\mathcal{D}(\A_k):=\mathcal{D}(A_{11})\times\ldots\times\mathcal{D}(A_{kk}) $.
\end{defi}

In the $(2\times 2)$ case it is sufficient to assume that $B$ (i.e. $A_{12}$) and $C$ (i.e. $A_{21}$) are relatively bounded with 
respect to $D$ and $A$ (i.e. $A_{22}$ and $A_{11}$) respectively. 
An $(n\times n)$ operator matrix will be treated as a $(2\times 2)$ matrix writing it as
\begin{align*}
	\A=\begin{pmatrix}
			\A_{n-1} & *\\ T & A_{nn}
			\end{pmatrix}\ .
	\end{align*}
We first need to understand under which assumptions $T$ is relatively $\A_{n-1}$-bounded. 
We do this in the next lemma.

\begin{lema}\label{lem:rel-bound}
	Let $\A$ be an $(n \times n)$ operator matrix satisfying Assumptions \ref{Ass1}. 
	Consider an operator vector
		\[
			T:=(T_1,\ldots,T_{n-1}):\mathcal{D}(T_{1})\times\ldots\times\mathcal{D}(T_{n-1})\rightarrow X_{n},
		\]
	 with $T_j$ relatively $A_{jj}$-bounded for all $j=1,\ldots,n-1$. 
	Furthermore let $\A_k$ be closed for $k=2,..,n$. 
	If $\sigma(A_{kk})\cup\sigma(\A_{k})\not=\C$ for $k\in\{2,\ldots,n-2\}$, then
	\begin{align*}
		T(\lambda-\A_{n-1})^{-1}\ is\ bounded  \qquad  \forall \lambda\not\in \sigma(A_{n-1,n-1})\cup\sigma(\A_{n-1}) \, .
	\end{align*}
	Hence, $T$ is relatively $\A_{n-1}$-bounded if in particular
	\begin{align}\label{restriction}
		\sigma(A_{kk})\cup\sigma(\A_k)\not=\C\qquad \forall k\in\{1,\ldots,n\}\setminus\{1,n\}\ .
	\end{align}
\end{lema}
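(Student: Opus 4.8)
The plan is to prove this by induction on $n$, following the block-decomposition strategy already announced: treat $\A_{n-1}$ as the upper-left $(2\times 2)$-block and $A_{n-1,n-1}$ as the lower-right scalar block. First I would fix $\lambda \notin \sigma(A_{n-1,n-1}) \cup \sigma(\A_{n-1})$ and expand $T(\lambda - \A_{n-1})^{-1}$ componentwise using the explicit formula for the resolvent of a $(2\times 2)$ block operator matrix coming from Schur's Lemma~\ref{lem:schur}, applied with the upper-left block $\A_{n-2}$ and lower-right block $A_{n-1,n-1}$. Each entry of $(\lambda - \A_{n-1})^{-1}$ is then a product built out of $(\lambda - A_{n-1,n-1})^{-1}$, the Schur-complement inverse $\Delta_\lambda^{-1}$ associated with $\A_{n-2}$, the off-diagonal operator vectors linking the $(n-1)$-st row/column to the first $n-2$ blocks, and — crucially — $(\lambda - \A_{n-2})^{-1}$-type objects. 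The point is that $T$ hits these entries either directly through a bounded factor like $T_{n-1}(\lambda - A_{n-1,n-1})^{-1}$ (bounded by relative $A_{n-1,n-1}$-boundedness) or through the truncated vector $(T_1,\ldots,T_{n-2})$ composed with $(\lambda - \A_{n-2})^{-1}$, which is bounded by the inductive hypothesis precisely because $\sigma(A_{kk}) \cup \sigma(\A_k) \neq \C$ for the relevant intermediate $k$.

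Concretely, the induction hypothesis I would carry is: for every $m \le n-1$ and every operator vector $(T_1,\ldots,T_{m-1})$ with $T_j$ relatively $A_{jj}$-bounded, if $\sigma(A_{kk}) \cup \sigma(\A_k) \neq \C$ for all $k \in \{2,\ldots,m-2\}$, then $(T_1,\ldots,T_{m-1})(\lambda - \A_{m-1})^{-1}$ is bounded for all $\lambda \notin \sigma(A_{m-1,m-1}) \cup \sigma(\A_{m-1})$. The base case $m=2$ (or the primitive $(2\times 2)$ situation) is immediate: there $T = T_1$ is relatively $A_{11}$-bounded by assumption, so $T_1(\lambda - A_{11})^{-1}$ is bounded for $\lambda \notin \sigma(A_{11})$ by the characterisation of relative boundedness recalled after Lemma~\ref{lem:schur}. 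For the inductive step, I would write
\begin{align*}
	\A_{n-1} = \begin{pmatrix} \A_{n-2} & B' \\ C' & A_{n-1,n-1} \end{pmatrix},
\end{align*}
where $B' = (A_{i,n-1})_{i=1}^{n-2}$ is a column vector of operators into $X_i$ and $C' = (A_{n-1,j})_{j=1}^{n-2}$ is a row vector; note $C'$ is relatively $\A_{n-2}$-bounded by the inductive hypothesis (this is the nontrivial input that requires $\sigma(A_{kk}) \cup \sigma(\A_k) \neq \C$ for $k \le n-3$), and $B'$ is relatively $A_{n-1,n-1}$-bounded since each $A_{i,n-1}$ is. Then I plug into \eqref{eq:inv}, multiply on the left by $T = (T', T_{n-1})$ with $T' = (T_1,\ldots,T_{n-2})$, and inspect the four resulting blocks. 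The first-column blocks produce $T'\Delta_\lambda^{-1}$ and $T_{n-1}(\lambda - A_{n-1,n-1})^{-1}C'\Delta_\lambda^{-1}$; the second-column blocks produce the same kind of products post-composed with the bounded operator $B'(\lambda - A_{n-1,n-1})^{-1}$. So everything reduces to showing $T'\Delta_\lambda^{-1}$ and $T'\Delta_\lambda^{-1}B'(\lambda-A_{n-1,n-1})^{-1}$ (and their $C'$-preceded analogues) are bounded, and since $\Delta_\lambda^{-1} = (\lambda - \A_{n-2})^{-1}(\text{something bounded})$ whenever $\lambda \notin \sigma(\A_{n-2})$ — or more robustly, one absorbs $\Delta_\lambda$ directly — the inductive hypothesis applied to $T'$ and $\A_{n-2}$ closes the argument, after checking $T'$ satisfies the hypothesis's assumptions (each $T_j$ relatively $A_{jj}$-bounded, and the spectral condition for $k \le n-4$, which is part of \eqref{restriction}).

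The main obstacle I expect is bookkeeping of the domains and the exact role of the closedness of each $\A_k$: one must be careful that $\Delta_\lambda^{-1}$ maps into $\mathcal{D}(\A_{n-2})$ so that composing with $T'$ (defined only on $\mathcal{D}(A_{11}) \times \cdots \times \mathcal{D}(A_{n-2,n-2})$) makes sense, and that the factorisations of $\Delta_\lambda$ used at each level are genuine operator identities on the correct cores — this is exactly why the hypothesis $\sigma(A_{kk}) \cup \sigma(\A_k) \neq \C$ is needed at the intermediate levels, since it guarantees a $\lambda$ in the intersection of the resolvent sets at which Schur's Lemma is applicable and the Neumann-series argument from the proof of Theorem~\ref{theo:2by2} can be run. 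The final sentence of the lemma, that \eqref{restriction} suffices for $T$ to be relatively $\A_{n-1}$-bounded, then follows immediately: \eqref{restriction} contains all the intermediate spectral conditions needed through the induction, and "$T(\lambda - \A_{n-1})^{-1}$ bounded for some $\lambda \notin \sigma(\A_{n-1})$" is equivalent to relative $\A_{n-1}$-boundedness by the cited characterisation, the resolvent set $\C \setminus (\sigma(A_{n-1,n-1}) \cup \sigma(\A_{n-1}))$ being nonempty under that same hypothesis applied with $k = n-1$.
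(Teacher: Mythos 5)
Your proposal follows essentially the same route as the paper: induction on $n$, the decomposition of $\A_{n-1}$ into the blocks $\A_{n-2}$, $B'=(A_{i,n-1})_{i=1}^{n-2}$, $C'=(A_{n-1,j})_{j=1}^{n-2}$, $A_{n-1,n-1}$, then Schur's Lemma~\ref{lem:schur} with the resolvent formula \eqref{eq:inv} and the splitting $T=(T',T_{n-1})$. One step needs care, however. Your primary mechanism for bounding $T'\Delta_\lambda^{-1}$, namely the factorisation $\Delta_\lambda^{-1}=(\lambda-\A_{n-2})^{-1}(\text{bounded})$, is only available when $\lambda\notin\sigma(\A_{n-2})$, and nothing in the hypotheses excludes $\lambda\in\sigma(\A_{n-2})$ for the given $\lambda\notin\sigma(A_{n-1,n-1})\cup\sigma(\A_{n-1})$; so that route does not cover all the $\lambda$ the lemma asserts. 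Only your parenthetical fallback (``absorb $\Delta_\lambda$ directly'') works in general, and it is exactly what the paper does: since $T'$ is relatively $\A_{n-2}$-bounded, one writes $\|T'\Delta_\lambda^{-1}x\|\le\alpha\|\Delta_\lambda^{-1}x\|+\beta\|\A_{n-2}\Delta_\lambda^{-1}x\|\le\max\{\alpha,\beta\}\,\|\Delta_\lambda^{-1}x\|_{\mathcal{D}(\A_{n-2})}$ and uses that $\Delta_\lambda^{-1}$ is bounded into $\mathcal{D}(\A_{n-2})$ equipped with the graph norm; no inversion of $\lambda-\A_{n-2}$ is required. A minor index slip in the same vein: to obtain relative $\A_{n-2}$-boundedness of $T'$ and $C'$ from the induction hypothesis you need the spectral condition for all $k\in\{2,\ldots,n-2\}$, not just $k\le n-3$ (or $n-4$), because the condition at $k=n-2$ is what guarantees the existence of a point at which $T'(\lambda-\A_{n-2})^{-1}$ is bounded, i.e.\ the upgrade from ``bounded at some resolvent point'' to ``relatively bounded''. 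With the fallback argument and these indices corrected, your proof coincides with the paper's.
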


\begin{proof}
	We proof the assertion by induction on $n\in \mathbb N$, $n\geq2$. \\
	For $n=2$ there is nothing to prove (and no condition as in \eqref{restriction} needs to be assumed), 
	since by assumption $T$ is relatively $A_{11}$-bounded and $A_{11}=\A_1$. 

	Let now $n\ge 3$ and let the statement be true up to $n-1$. 
	We regard $\A_{n-1}$ as a $(2\times 2)$ operator matrix writing
	\begin{align*}
		\A_{n-1}=\begin{pmatrix} A & B\\ C & D\end{pmatrix} \mbox{ with } &A \coloneqq \A_{n-2},\; 
				B \coloneqq \begin{pmatrix} A_{i,n-1}\end{pmatrix}_{i=1}^{n-2},\\
		& C \coloneqq (A_{n-1,j})_{j=1}^{n-2} \mbox{ and } D\coloneqq A_{n-1,n-1}\ .
	\end{align*}
	Define also
		\[
			\hat T := (T_1,\ldots,T_{n-2}):\mathcal{D}(T_{1})\times\ldots\times\mathcal{D}(T_{n-2})\rightarrow X_{n} \, .
		\]
	Since $\sigma(A_{kk})\cup\sigma(\A_{k})\not=\C$ for $k\in\{2,\ldots,n-2\}$,
	by induction hypothesis $C$ is relatively $A$-bounded and also $\hat{T}$ is relatively $A$-bounded. 
	Clearly $B$ is relatively $D$-bounded. 
	As in Lemma~\ref{lem:schur}, we consider, for $\lambda \not \in \sigma(D)$ (i.e. $\lambda \not \in \sigma(A_{n-1,n-1})$), the operator 
		\[
			\Delta_{\lambda}= \lambda-A-B(\lambda-D)^{-1}C: \mathcal{D}(A) \rightarrow  X_1\times \ldots \times X_{n-2}\ .
		\]
	For $\lambda\not\in\left(\sigma(A_{n-1,n-1})\cup\sigma(\A_{n-1})\right)$ by Lemma~\ref{lem:schur} 
	$\Delta_{\lambda}$ has a bounded inverse $\Delta_{\lambda}^{-1}$.  
	It follows from \eqref{eq:inv} and writing 
		\[
			T:=(\hat T , T_{n-1})\ ,
		\]
	that
	\begin{align*}
		& T(\lambda-\A_{n-1})^{-1} =(V_1, V_2) 
	\end{align*}
	with 
	\allowdisplaybreaks{
		\begin{align*}
			V_1 &: = \hat T \Delta_\lambda^{-1} + T_{n-1} (\lambda-D)^{-1} C \Delta_\lambda^{-1} , \\
			V_2 & := \hat T \Delta_\lambda^{-1} B (\lambda-D)^{-1}+T_{n-1} (\lambda-D)^{-1} 
				\lbrack \Id_{X_{n-1}}+C \Delta_\lambda^{-1} B (\lambda-D)^{-1} \rbrack\, .
		\end{align*}}
	Since $\hat{T}$ is relatively $A$-bounded, there exist positive constants $\alpha$ and $\beta$ 
	such that for $x \in X_1\times .. \times X_{n-2}$
	\begin{align*}
		\| \hat T \Delta_{\lambda}^{-1} x \|_{X_n} & \leq  \alpha \|\Delta_{\lambda}^{-1} x \|_{X_1\times .. \times X_{n-2}} + 
				\beta \| A \Delta_{\lambda}^{-1} x \|_{X_1\times .. \times X_{n-2}} \\
		& \leq \max\{\alpha,\beta\} \|\Delta_{\lambda}^{-1} x \|_{\mathcal{D}(A)} \leq \max\{\alpha,\beta\}
				\| \Delta_{\lambda}^{-1}  \| \|x \|_{X_1\times .. \times X_{n-2}} \, .
	\end{align*}
	Hence, $\hat T \Delta_\lambda^{-1}$ is bounded. 
	Likewise, $C\Delta_\lambda^{-1}$ is bounded. 
	As $T_{n-1}$ and $B$ are relatively $D$-bounded, we see that $T_{n-1} (\lambda-D)^{-1}$ and $B (\lambda-D)^{-1}$ are bounded.
		%and thus $T_{n-1} (\lambda-D)^{-1} C \Delta_\lambda^{-1}$ is bounded as $T_{n-1}$ is relatively $D$-bounded. 
		%Also $\hat T 	\Delta_\lambda^{-1} B (\lambda-D)^{-1}$ is bounded as $B$ is relatively $D$-bounded. 
		%Finally $T_{n-1} (\lambda-D)^{-1} \lbrack \Id_{X_{n-1}}-C \Delta_\lambda^{-1} B (\lambda-D)^{-1}]$ is bounded 
		%as we have seen that $C \Delta_\lambda^{-1}, B (\lambda-D)^{-1}$ and $T_{n-1} (\lambda-D)^{-1}$ are bounded. 
	We conclude that  $T(\lambda-\A_{n-1})^{-1}$ is bounded. 
\end{proof}

\section{Main results}\label{sec:main}

%%%%%%%%%%%%%%%%%%%%%%%%%%%%%%%%%%%%%%%%%%%%%%%%%%%%%%%%%%%%%%%%%

\subsection{Spectral localisation by means of the Schur sets}

We are now in position to give an estimate for the spectrum of $\A$ using only the methods we derived from Schur's Lemma \ref{lem:schur}. 

\begin{theo}\label{theo:nagel-implies-gershgorin}
	Let $\A$ be an $(n \times n)$ operator matrix satisfying Assumptions \ref{Ass1} and further assume that 
	$\A_k$ are closed for all $k \in \{2, ...,n\}$. 
	If \eqref{restriction} holds, then
	\begin{align*}
		\sigma(\A)\subset S_n(\A) \, .
	\end{align*}
\end{theo}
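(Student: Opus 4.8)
The strategy is to reduce the $(n\times n)$ case to the $(2\times 2)$ case by regarding $\A$ as the block matrix
\begin{align*}
	\A=\begin{pmatrix} \A_{n-1} & B\\ T & A_{nn}\end{pmatrix}
\end{align*}
with $B=(A_{i,n})_{i=1}^{n-1}$ and $T=(A_{n,j})_{j=1}^{n-1}$, and then invoking Schur's Lemma~\ref{lem:schur} for this decomposition. First I would fix $\lambda\in\C$ outside the candidate spectral set, i.e. $\lambda\notin S_n(\A)$; by definition of $S_n(\A)=\bigcup_{j\ne n}S_{nj}(\A)$ this means $\lambda\notin\sigma(A_{nn})$, $\lambda\notin\sigma(A_{jj})$ for every $j$, and $\mathcal R_{nj}(\lambda)<1$ for every $j\in\{1,\dots,n-1\}$. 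The goal is to show $\lambda\notin\sigma(\A)$.

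The first substantive step is to check the hypotheses needed to apply Lemma~\ref{lem:schur} to the above $(2\times 2)$ block decomposition: $\A_{n-1}$ is closed by assumption, $A_{nn}$ is closed by Assumption~\ref{Ass1}, $B$ is relatively $A_{nn}$-bounded (each entry $A_{i,n}$ is, and a finite operator vector into the one-dimensional-in-blocks target is then relatively bounded — a routine check of the graph-norm inequality), and $T$ is relatively $\A_{n-1}$-bounded by Lemma~\ref{lem:rel-bound}, which is exactly where hypothesis \eqref{restriction} is used. Then Lemma~\ref{lem:schur}, applied with $A_{nn}$ playing the role of $D$, says that since $\lambda\notin\sigma(A_{nn})$, we have $\lambda\notin\sigma(\A)$ if and only if the Schur complement
\begin{align*}
	\Delta_\lambda=\lambda-\A_{n-1}-B(\lambda-A_{nn})^{-1}T:\mathcal D(\A_{n-1})\to X_1\times\dots\times X_{n-1}
\end{align*}
has a bounded inverse. (Here I should double-check which variant of Schur's Lemma I want — the remark after Lemma~\ref{lem:schur} notes the symmetric statement taking the complement with respect to the other diagonal block, and I want the complement that lives on the $(n-1)$-dimensional block so that the induction/factorisation below works.)

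The heart of the proof is to show $\Delta_\lambda$ is boundedly invertible. Since $\lambda\notin\sigma(\A_{n-1})$ — which holds because $S_{n-1}(\A)\subset S_n(\A)$? no, that is not automatic, so instead I would either argue inductively that the smaller-block hypotheses propagate, or, more cleanly, observe that $\lambda\notin\sigma(\A_{n-1})$ is \emph{not} needed if I factor differently. The cleanest route: multiply on the right by $(\lambda-\A_{n-1})^{-1}$ only if that resolvent exists; to avoid circularity I instead compare $\Delta_\lambda$ with a product built from the individual diagonal resolvents $(\lambda-A_{jj})^{-1}$, which do exist. Concretely, write $\Delta_\lambda$ as $\big(\Id - M_\lambda\big)\cdot(\text{something invertible})$ where $M_\lambda$ is the operator matrix on $X_1\times\dots\times X_{n-1}$ whose $(i,j)$ entry is $\big(A_{i,n}(\lambda-A_{nn})^{-1}A_{n,j}+(1-\delta_{ij})A_{ij}\big)(\lambda-A_{jj})^{-1}$ — note this is precisely the collection of operators whose row sums define $\mathcal R_{nj}(\lambda)$ in \eqref{eq:Rkj} — composed with the invertible block-diagonal operator $\operatorname{diag}(\lambda-A_{jj})$. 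By Lemma~\ref{lem:sums}, $\|M_\lambda\|_1\le\max_{j}\sum_{i\ne n}\|(A_{i,n}(\lambda-A_{nn})^{-1}A_{n,j}+(1-\delta_{ij})A_{ij})(\lambda-A_{jj})^{-1}\|=\max_j\mathcal R_{nj}(\lambda)<1$, so $\Id-M_\lambda$ is invertible by the Neumann series, hence so is $\Delta_\lambda$, and its inverse is bounded. Applying Lemma~\ref{lem:schur} once more then gives $\lambda\notin\sigma(\A)$, completing the proof.

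\textbf{Main obstacle.} The delicate point is the bookkeeping in the factorisation $\Delta_\lambda=(\Id-M_\lambda)\operatorname{diag}(\lambda-A_{jj})$: one must verify that the term $B(\lambda-A_{nn})^{-1}T$, when expanded as an $(n-1)\times(n-1)$ operator matrix, has exactly the entries $A_{i,n}(\lambda-A_{nn})^{-1}A_{n,j}$, and that these combine with the off-diagonal entries $-A_{ij}$ of $\lambda-\A_{n-1}$ and the diagonal resolvents to reproduce the definition \eqref{eq:Rkj} of $\mathcal R_{nj}$ including the Kronecker-delta correction; this is where the precise form of the Schur sets in Definition~\ref{defi:S_i} was engineered, and getting the indices and signs to line up is the only real work. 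A secondary technical nuisance is ensuring all the unbounded operators are composed on the correct domains (so that $\Delta_\lambda$ really does factor as claimed as operators on $\mathcal D(\A_{n-1})$), which is handled exactly as in the proof of Theorem~\ref{theo:2by2} but must be stated carefully.
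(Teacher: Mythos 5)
Your proposal is correct and follows essentially the same route as the paper: the same block decomposition with $A=\A_{n-1}$, $D=A_{nn}$, the same appeal to Lemma~\ref{lem:rel-bound} for the relative boundedness of the bottom row (which is exactly where \eqref{restriction} enters), and the same factorisation $\Delta_\lambda=(\Id-R)\,\diag(\lambda-A_{jj})$ with $\|R\|_1\le\max_j\mathcal R_{nj}(\lambda)<1$ via Lemma~\ref{lem:sums} and the Neumann series. Your observation that invertibility of $\lambda-\A_{n-1}$ is not needed — because one factors through the block-diagonal resolvents rather than the full resolvent of $\A_{n-1}$ — is precisely the point of the paper's argument.
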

\begin{proof}
	The case $n=2$ was already considered in Theorem \ref{theo:2by2}. 
	The proof in the general case is rather similar. 
	For this we split $\A$ into the blocks $A \coloneqq \A_{n-1}$, $B \coloneqq ( A_{in})_{i=1}^{n-1}$,
	$C \coloneqq (A_{nj})_{j=1}^{n-1}$, $D\coloneqq A_{nn}$ and write 
	\begin{align*}
		\A=\begin{pmatrix} A & B\\ C & D\end{pmatrix} \, .
	\end{align*} 
	By assumption $A$ with domain $\mathcal{D}(A_{11})\times\ldots\times\mathcal{D}(A_{n-1,n-1})$ is closed. 
	This together with Lemma \ref{lem:rel-bound} yield that $C$ is relatively $A$-bounded. 
	 
	Consider a $\lambda\not\in S_n(\A)$. Then $\lambda\not\in\sigma(D)=\sigma(A_{nn})$ and we can apply Schur's Lemma.
	We will show that  $\Delta_{\lambda}=\lambda - A-B(\lambda-D)^{-1}C$ has a bounded inverse, 
	which implies $\lambda\not\in\sigma(\A)$ by Lemma \ref{lem:schur}. 
	We denote by $L$ the operator matrix $L\coloneqq \diag(\lambda - A_{ii})_{i=1}^{n-1}$. 
	Since $\lambda \not \in S_n(\A)$, $L$ is invertible and we find
	\begin{align*}
		\Delta_{\lambda} & = \lambda - A-B(\lambda-D)^{-1}C\\
		& = L - \Big((1-\delta_{ij}) A_{ij}\Big)_{i,j=1}^{n-1} + \Big(A_{in}(\lambda-A_{nn})^{-1} A_{nj}\Big)_{i,j=1}^{n-1}\\
		&=[I- \Big( (1-\delta_{ij}) A_{ij}  +A_{in}( \lambda-A_{nn})^{-1} A_{nj}\Big)_{i,j=1}^{n-1} L^{-1}]L \\
		&=:[I-R]L,
	\end{align*}
	with an $\big((n-1)\times (n-1)\big)$ operator matrix $R:=(R_{ij})_{i,j=1}^{n-1}$ given by
	\begin{align*}
		R_{ij}&=\Big( A_{in}(\lambda-A_{nn})^{-1}A_{nj}+ (1-\delta_{ij})A_{ij} \Big) (\lambda-A_{jj})^{-1}. 
	\end{align*}
	Since $L$ is invertible, it remains to show that $I-R$ is invertible and for this we will once again use the Neumann series criterion. 	
	By Lemma \ref{lem:sums} and the definition of ${\mathcal R}_{kj}(\lambda)$ (see \eqref{eq:Rkj}), the norm of $R$ can be estimated by
	\begin{align*}
		\|R\|&\leq\max_{j=1,\ldots,n-1}\left\{{\mathcal R}_{nj}(\lambda) \right\} < 1 \, ,
	\end{align*}
	since $\lambda \not \in S_{n}(\A)$ and hence $\lambda \not \in S_{nj}(\A)$ for all $j \in \{1, \ldots, n-1\}$. 
	Thus $\Delta_{\lambda}$ has a bounded inverse as a composition of operators with bounded inverses. 
	The claim follows.
\end{proof}

\begin{rema}
	If we consider $(X, \| \cdot\|_{\infty})$, then the analogous to Lemma \ref{lem:sums} is
	\begin{align*}
				\|\A\|\leq\underset{i=1,\ldots,n}{\max}\sum_{j=1}^n\|A_{ij}\| \, ,
		 \end{align*}
	and hence the condition for the invertibility of $I -R$ would be 
	$\underset{i=1,\ldots,n-1}{\max}\big\{\tilde{{\mathcal R}}_{ni}(\lambda) \big\} < 1$ where
	\begin{align*}
		\tilde{{\mathcal R}}_{ni}(\lambda) & =
		\sum_{\substack{j=1}}^{n-1} \Big\|\Big(A_{in}(\lambda-A_{nn})^{-1}A_{nj}+ (1-\delta_{ij})A_{ij}\Big)(\lambda-A_{jj})^{-1} \Big\| \, .
	\end{align*}
\end{rema}

Since the spectrum is invariant under permutations, one may improve Theorem~\ref{theo:2by2} by also observing the inclusion 
	\[
		\sigma(\mathbb A)\subset S_{12}(\mathbb{A})
	\] 
and therefore by the distributive property of union over intersection
	\[
		\sigma(\mathbb A)\subset \sigma(A)\cup \sigma(D) \cup \{\lambda\in \mathbb C\setminus  \sigma(A)\cup \sigma(D) : 
		\mathcal R_{21}(\lambda)\ge 1\hbox{ and }\mathcal R_{12}(\lambda)\ge 1\}\ ,
	\] 
where
	\[
		\mathcal R_{12}(\lambda):=\|C(\lambda-A)^{-1}B(\lambda-D)^{-1}\|\quad\hbox{and}\quad \mathcal R_{21}(\lambda):=
		\|B(\lambda-D)^{-1}C(\lambda-A)^{-1}\|\ .
	\]
Likewise, it is possible to improve the estimate in Theorem~\ref{theo:nagel-implies-gershgorin} by permuting the order of the $X_i$'s,
as the $n$-th column and row play a special role in Theorem \ref{theo:nagel-implies-gershgorin}. 
For this the following definition is useful.

\begin{defi}\label{defi:Aperm} 
	Let $\pi$ be a permutation of $\{ 1, ..,n\}$. 
	Given an $(n \times n)$ operator matrix $\A=(A_{ij})$ satisfying Assumptions \ref{Ass1}, 
	the \emph{permuted matrix} $\A(\pi)$ is the $(n \times n)$ operator matrix obtained by first permuting the rows and 
	then permuting the columns of $\A$, according to $\pi$. 
	That is, 
		$$\A(\pi):\mathcal{D}(\A(\pi)) \subset X_{\pi^{-1}(1)}\times\ldots\times X_{\pi^{-1}(n)} 
		\rightarrow   X_{\pi^{-1}(1)}\times\ldots\times X_{\pi^{-1}(n)}\, ,$$
	with $\mathcal{D}(\A(\pi)):=  \mathcal{D}(A_{\pi^{-1}(1),\pi^{-1}(1)}) \times .. \times  \mathcal{D}(A_{\pi^{-1}(n),\pi^{-1}(n)})$ and
		$$\A(\pi):=(A_{\pi^{-1}(i), \pi^{-1}(j)})_{i,j=1}^n \, .$$  
\end{defi}

\begin{rema}
	Since $\A$ satisfies Assumptions \ref{Ass1}, so does the permuted matrix	 $\A(\pi)$. 
	Moreover, if $\A$ is closed then also $\A(\pi)$ is closed and $\sigma(\A)=\sigma(\A(\pi))$. 
	This can be easily seen writing $\A(\pi)=P \A P^{t}$ where $P$ is the $(n \times n)$ permutation matrix 
		$$P=(P_{ij})_{i,j=1}^n:  X_{1}\times\ldots\times X_{n} \rightarrow X_{\pi^{-1}(1)}\times\ldots\times X_{\pi^{-1}(n)},$$
	with $P_{ij}=0$ for $i\not=\pi(j)$ and $P_{\pi(j),j}=\Id_{X_j}$. 
	Similarly, if $\A$ is self-adjoint then so is $\A(\pi)$.
\end{rema}

We then obtain the following refinement of Theorem~\ref{theo:nagel-implies-gershgorin}.

\begin{coro}\label{cor:nagel-implies-gershgorin}
	Let $\A$ be an $(n \times n)$ operator matrix satisfying Assumptions \ref{Ass1}. %If $\mathfrak I\neq \emptyset$,
Then
		\begin{align*}
			\sigma(\A)\subset \bigcap_{m\in  \mathfrak I} S_m(\A)\ ,
		\end{align*}
	where $\mathfrak I$ is the set of those $m\in \{1,\ldots,n\}$ 
	such that there is a permutation $\pi$ of the set $\{1,\ldots,n\}$ for which 
	\begin{enumerate}
		\item $\pi(m)=n$;
		\item $\A(\pi)_{k}$ is closed for all $k\in\{1,\ldots,n\}\setminus\{1,n\}$;
		\item $\sigma\big(A_{\pi^{-1}(k),\pi^{-1}(k)}\big)\cup\sigma\big(\A(\pi)_k\big)\not=\C$ for all $k\in\{1,\ldots,n\}\setminus \{1,n\}$.
	\end{enumerate}
\end{coro}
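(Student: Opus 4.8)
The plan is to reduce Corollary~\ref{cor:nagel-implies-gershgorin} directly to Theorem~\ref{theo:nagel-implies-gershgorin}, applied not to $\A$ itself but to each permuted matrix $\A(\pi)$ with $\pi$ ranging over the permutations singled out by the index set $\mathfrak I$. The key observation is that for a fixed $m\in\mathfrak I$ and an admissible permutation $\pi$ with $\pi(m)=n$, conditions (2) and (3) are precisely the hypotheses needed to invoke Theorem~\ref{theo:nagel-implies-gershgorin} for $\A(\pi)$: indeed (2) says $\A(\pi)_k$ is closed for $k\in\{2,\ldots,n-1\}$ (the cases $k=1$ and $k=n$ being automatic, since $\A(\pi)_1 = A_{\pi^{-1}(1),\pi^{-1}(1)}$ is closed by Assumptions~\ref{Ass1} and $\A(\pi)_n=\A(\pi)$ is closed because $\A$ is), and (3) is exactly the restriction~\eqref{restriction} written out for $\A(\pi)$. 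Hence Theorem~\ref{theo:nagel-implies-gershgorin} gives $\sigma(\A(\pi))\subset S_n(\A(\pi))$.

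Next I would use the remark following Definition~\ref{defi:Aperm}: since $\A(\pi)=P\A P^t$ with $P$ the permutation matrix, we have $\sigma(\A(\pi))=\sigma(\A)$. Therefore $\sigma(\A)\subset S_n(\A(\pi))$ for every such $\pi$. The remaining point is to identify $S_n(\A(\pi))$ with $S_m(\A)$. This is a bookkeeping step: the $n$-th row and column of $\A(\pi)$ correspond, via $\pi^{-1}$, to the $m$-th row and column of $\A$ (because $\pi(m)=n$ means $\pi^{-1}(n)=m$), and the diagonal blocks $A_{\pi^{-1}(k),\pi^{-1}(k)}$ of $\A(\pi)$ are a reordering of the diagonal blocks $A_{kk}$ of $\A$. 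Writing out the definition of $\mathcal{R}_{nj}$ for $\A(\pi)$ — a sum over $i\ne n$ of norms of operators built from the $n$-th row/column entries $A_{\pi^{-1}(i),\pi^{-1}(n)}=A_{\pi^{-1}(i),m}$ and $A_{\pi^{-1}(n),\pi^{-1}(j)}=A_{m,\pi^{-1}(j)}$ — one sees after reindexing $i\mapsto \pi^{-1}(i)$, $j\mapsto\pi^{-1}(j)$ that $\mathcal{R}_{nj}^{\A(\pi)}(\lambda)=\mathcal{R}_{m,\pi^{-1}(j)}^{\A}(\lambda)$, so $S_{nj}(\A(\pi))=S_{m,\pi^{-1}(j)}(\A)$ and taking the union over $j\ne n$ (equivalently over $\pi^{-1}(j)\ne m$) yields $S_n(\A(\pi))=S_m(\A)$. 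Thus $\sigma(\A)\subset S_m(\A)$ for each $m\in\mathfrak I$, and intersecting over all such $m$ gives the claim.

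I expect the only genuinely delicate step to be verifying that the norm expression $\mathcal{R}_{kj}$ is permutation-covariant in the precise sense above, i.e.\ that the Schur complement structure is respected by the conjugation $\A\mapsto P\A P^t$. The subtlety is that $\mathcal{R}_{nj}(\lambda)$ for $\A(\pi)$ involves the resolvent $(\lambda-\A(\pi)_{n-1})^{-1}$ implicitly through the closedness of $\A(\pi)_{n-1}$ and the relative-boundedness machinery of Lemma~\ref{lem:rel-bound}, so one must be careful that the upper-left block $\A(\pi)_{n-1}$ is the one obtained by deleting the row and column indexed by $m$ in $\A$ (up to the chosen internal ordering), and that condition (2) indeed guarantees closedness of all the intermediate blocks $\A(\pi)_k$ needed to run Lemma~\ref{lem:rel-bound}. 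Once this identification is pinned down the rest is immediate; no convergence or analytic issue arises beyond what is already contained in Theorem~\ref{theo:nagel-implies-gershgorin}.
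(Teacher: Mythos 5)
Your proposal is correct and follows essentially the same route as the paper's own proof: apply Theorem~\ref{theo:nagel-implies-gershgorin} to each permuted matrix $\A(\pi)$, use $\sigma(\A)=\sigma(\A(\pi))$, identify $S_n(\A(\pi))=S_m(\A)$ via the reindexing identity for $\mathcal{R}_{nj}$ (the paper's equation~\eqref{eq:auf}), and intersect over $m\in\mathfrak I$. The only point worth noting is that both your argument and the paper's implicitly use that $\A(\pi)_n=\A(\pi)$ itself is closed (needed to invoke the theorem), which condition~(2) of the corollary does not literally cover since it excludes $k=n$; you at least flag this by attributing it to closedness of $\A$.
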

\begin{proof}
	Let $m$ be an element of $ \mathfrak I$ and $\pi$ be a permutation of $\{1, .., n\}$ as in the definition of $\mathfrak I$. 
	Then $\A(\pi)$ fullfills the hypothesis of Theorem \ref{theo:nagel-implies-gershgorin} and hence
	\begin{align*}
		\sigma(\A)=\sigma(\A(\pi))&\subset S_n(\A(\pi))	\ . 
	\end{align*}
	By the definition of $\mathcal{R}_{nj}(\lambda)$ in \eqref{eq:Rkj} 
	(adding a dependence on the matrix in the notation), one sees that
	\begin{equation}\label{eq:auf} 
		\mathcal{R}_{nk} (\lambda;\A(\pi)) =  \mathcal{R}_{mj} (\lambda;\A) \mbox{ if }m\ne j \mbox{ and }\pi(m)=n, \pi(j)=k \, .
	\end{equation}
	Hence, $\sigma(\A)\subset S_n(\A(\pi))	  = S_m(\A)$. 
	Consequently the spectrum of $\A$ is contained in the intersection of these sets, as we wanted to prove.
\end{proof}

\begin{rema}
	1) Condition (3) in the definition of $\mathfrak I$ is surely satisfied if $\A$ is self-adjoint. 
	If on the other hand all entries of $\mathbb A$ are bounded, then by definition $\mathfrak I=\{1,\ldots,n\}$.	\\
	2) Observe that Corollary~\ref{cor:nagel-implies-gershgorin} is actually compatible with Theorem~\ref{theo:nagel-implies-gershgorin},
	in the sense that if the assumptions of the latter hold, then at least the identity is an allowed permutation and 
	thus $n$ is an element of $\mathfrak I$.

3) From Equation \eqref{eq:auf}, it is clear that given two permutations $\pi_1$ and $\pi_2$ of $\{1,\ldots,n\}$ with 
$\pi^{-1}_1(n)=\pi^{-1}_2(n)$, then $S_n(\A(\pi_1))=S_n(\A(\pi_2))$. 
But it could be that Condition \textit{(3)} in Corollary 
\ref{cor:nagel-implies-gershgorin} is satisfied for only one of these two permutations. 
This is the reason for allowing arbitrary permutations, instead of restricting to permutations that 
interchange only one of the rows/columns with the $n$-th row/column.
\end{rema}

%%%%%%%%%%%%%%%%%%%%%%%%%%%%%%%%%%%%%%%%%%%%%%%%%%%%%%%%%%%%%%%%%%%%%%%%%%%%%%%%%%%%%%

\subsection{Spectral localisation by means of the Cassini ovals}

Cassini ovals for operator matrices of bounded linear operators are studied in~\cite[\S~5]{HerSch07}. 
It is easy to check that the proof in~\cite[Thm.~5.1]{HerSch07} remains valid if the diagonal entries are merely closed. 
However, it is not clear whether it can be adapted to the case of unbounded off-diagonal entries. 
Furthermore, Cassini-type inclusions have been proved in~\cite{HerSch07} merely 
for the \emph{approximate point spectrum} of such operator matrices: 
We are going to sharpen said spectral localisation as a consequence of the results in the previous section.

\begin{defi}\label{defi:Cassini}
	Let $\A=(A_{ij})$ be an $(n\times n)$ operator matrix satisfying Assumptions \ref{Ass1}. 
	For $1\le i,j\le n$, $i \ne j$, the \emph{Cassini ovals} are the sets
		\[
			C_{ij}(\A)\coloneqq\sigma(A_{ii})\cup\sigma(A_{jj})\cup \widetilde{C}_{ij}(\A)\ ,
		\]
	where
	\begin{align*}
		\widetilde{C}_{ij}(\A)&\coloneqq\Big\{\lambda\in\C\ \Big\vert\ \lambda\not\in\sigma(A_{ii})\cup\sigma(A_{jj}) \mbox{ and }\\ 
				& \qquad \qquad \quad \Big(\hspace{-.35cm}\sum_{\quad l=1, l\not=i}^n\|A_{li}(\lambda-A_{ii})^{-1}\|\Big)
				\Big(\hspace{-.35cm}\sum_{\quad l=1, l\not=j}^n\|A_{lj}(\lambda-A_{jj})^{-1}\|\Big)\geq1\Big\}.
	\end{align*}
\end{defi}
It is clear from the definition that $C_{ij}(\A)=C_{ji}(\A)$ and that, by sub-multiplicativity of the norm, 
the Cassini ovals are contained in the Gershgorin disks defined in \eqref{eq:Gi}.
% As a direct consequence, also the Gershgorin-type-estimates from~\cite{Sal99} become a special case of our next result.

\begin{theo}\label{theo:nagel-implies-cassini}
	Let $\A$ be an $(n \times n)$ operator matrix satisfying Assumptions \ref{Ass1}. 
	Assume that for any permutation $\pi$ of $\{ 1, \dots,n\}$, $\A(\pi)_k$ is closed for any $k\in\{1,\ldots,n\}\setminus\{1,n\}$. 
	Then,
	\begin{align}\label{eq:CA}
		\sigma(\A)\subset C(\A)\coloneqq\bigcup_{1 \leq i <j\leq n} C_{ij}(\A)\ . 
	\end{align}
\end{theo}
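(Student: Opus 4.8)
The plan is to deduce the Cassini-oval inclusion \eqref{eq:CA} from the Schur-set inclusion of Corollary~\ref{cor:nagel-implies-gershgorin}, by showing that for a suitable choice of permutations the corresponding Schur sets $S_m(\A)$ are contained in the union of the Cassini ovals. First I would fix $\lambda\in\C$ with $\lambda\notin\sigma(A_{ii})$ for all $i$ (since every $\sigma(A_{ii})$ already lies inside $C(\A)$, these points are harmless), and I would argue by contradiction: suppose $\lambda\notin C(\A)$, so that for every pair $i\neq j$ one has $r_i(\lambda)\,r_j(\lambda)<1$, where I write $r_i(\lambda):=\sum_{l=1,\,l\neq i}^n\|A_{li}(\lambda-A_{ii})^{-1}\|$. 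The elementary observation is that if all pairwise products $r_ir_j$ are $<1$, then at most one index $i$ can have $r_i(\lambda)\geq 1$; equivalently, there is an index $m$ (one may take $m$ to be the index maximising $r_m(\lambda)$, or rather the unique index with $r_m\geq 1$ if it exists, and any index otherwise) such that $r_j(\lambda)<1$ for all $j\neq m$.

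Next I would show that, with this choice of $m$, the point $\lambda$ lies outside $S_m(\A)$, by bounding $\mathcal R_{mj}(\lambda)$ for each $j\neq m$. Using sub-multiplicativity and the triangle inequality on the definition \eqref{eq:Rkj},
\begin{align*}
	\mathcal R_{mj}(\lambda)
		&\le\sum_{i=1,\,i\neq m}^n\Big(\|A_{im}(\lambda-A_{mm})^{-1}\|\,\|A_{mj}(\lambda-A_{jj})^{-1}\|
			+(1-\delta_{ij})\|A_{ij}(\lambda-A_{jj})^{-1}\|\Big)\\
		&\le r_m(\lambda)\,\|A_{mj}(\lambda-A_{jj})^{-1}\|+\sum_{i=1,\,i\neq m,\,i\neq j}^n\|A_{ij}(\lambda-A_{jj})^{-1}\|\, ,
\end{align*}
and the two terms on the right together are at most $r_m(\lambda)\,\|A_{mj}(\lambda-A_{jj})^{-1}\|+\big(r_j(\lambda)-\|A_{mj}(\lambda-A_{jj})^{-1}\|\big)$; writing $s:=\|A_{mj}(\lambda-A_{jj})^{-1}\|$ this equals $r_j(\lambda)+s(r_m(\lambda)-1)$. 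Here I would split into cases: if $r_m(\lambda)\le 1$ then, since $r_j(\lambda)<1$, we get $\mathcal R_{mj}(\lambda)<1$ directly; if $r_m(\lambda)>1$, then $s\le r_j(\lambda)$ (as $s$ is one of the summands in $r_j$) gives $r_j+s(r_m-1)\le r_j+r_j(r_m-1)=r_jr_m<1$ by the hypothesis $\lambda\notin C(\A)$. In all cases $\mathcal R_{mj}(\lambda)<1$ for every $j\neq m$, hence $\lambda\notin S_m(\A)$.

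It then remains to check that this $m$ is an admissible index in the sense of Corollary~\ref{cor:nagel-implies-gershgorin}, i.e.\ $m\in\mathfrak I$. Conditions (1) and (2) are immediate: one takes any permutation $\pi$ with $\pi(m)=n$, and closedness of $\A(\pi)_k$ for $k\notin\{1,n\}$ is exactly the standing hypothesis of the theorem. Condition (3), namely $\sigma(A_{\pi^{-1}(k),\pi^{-1}(k)})\cup\sigma(\A(\pi)_k)\neq\C$, is the only point requiring a little care; however, for the purpose of the final inclusion one does not need $m$ itself to lie in $\mathfrak I$ as long as \emph{some} admissible index works for $\lambda$, and in fact a cleaner route is to observe that $\lambda\notin\bigcap_{m\in\mathfrak I}S_m(\A)$ already follows once $\mathfrak I\neq\emptyset$ and $\lambda\notin S_m(\A)$ for one $m\in\mathfrak I$ — so the argument should be organised around the whole index set. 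Concretely: for every $m\in\{1,\ldots,n\}$ that is admissible, the computation above shows $\lambda\notin S_m(\A)$ provided $r_j(\lambda)<1$ for all $j\ne m$; since at most one index has $r_i\ge 1$, this holds simultaneously for all but at most one admissible $m$, so $\lambda\notin\bigcap_{m\in\mathfrak I}S_m(\A)$ as soon as $\mathfrak I$ contains two admissible indices, and the degenerate case $|\mathfrak I|\le 1$ has to be treated by hand (when $\mathfrak I$ is a singleton $\{m_0\}$ one checks separately, using that by the contradiction hypothesis $r_j<1$ for all $j$ or $r_j<1$ for all $j\ne m_0$, that $\lambda\notin S_{m_0}(\A)$). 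I expect the bookkeeping around Condition (3) and the edge cases of how many indices satisfy $r_i\ge 1$ to be the main obstacle; the analytic heart — the two-case estimate $r_j+s(r_m-1)<1$ — is short and is the real reason Cassini ovals emerge from Schur sets.
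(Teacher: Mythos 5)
Your analytic core is correct and is in substance the same computation as the paper's: writing $r_i(\lambda)=\sum_{l\ne i}\|A_{li}(\lambda-A_{ii})^{-1}\|$ and $s=\|A_{mj}(\lambda-A_{jj})^{-1}\|$, the bound $\mathcal R_{mj}(\lambda)\le r_j+s(r_m-1)$ together with the two cases $r_m\le 1$ and $r_m>1$ (using $s\le r_j$ and $r_mr_j<1$) is exactly how Cassini ovals absorb the Schur sets; the paper organises the same inequalities as a contradiction chain ($\lambda\in S_{km}$ forces $r_m>1$, then $\lambda\in S_{mj}$ forces $r_mr_j\ge \mathcal R_{mj}\ge 1$), and your direct version is, if anything, cleaner. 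The genuine gap is in the reduction to Corollary~\ref{cor:nagel-implies-gershgorin}. Your argument produces \emph{one} distinguished index $m$ with $\lambda\notin S_m(\A)$ --- necessarily the unique index with $r_m\ge 1$ if such an index exists --- and you then need that particular $m$ to lie in $\mathfrak I$, which Condition~(3) does not guarantee. Your proposed workaround reverses a quantifier: if $r_{m_0}(\lambda)\ge 1$ for some $m_0$, then the hypothesis ``$r_j(\lambda)<1$ for all $j\ne m$'' holds for \emph{at most one} $m$ (namely $m=m_0$), not ``for all but at most one admissible $m$''; for $k\ne m_0$ your estimate of $\mathcal R_{k m_0}(\lambda)$ only yields the useless bound $r_{m_0}(\lambda)$, so you cannot trade $m_0$ for a different admissible index. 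As written, the proof fails whenever $m_0\notin\mathfrak I$.

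The missing idea is the paper's induction on $n$, which disposes of precisely this obstruction. If $\mathfrak I\ne\{1,\ldots,n\}$, then for some permutation $\pi$ and some $k\in\{2,\ldots,n-1\}$ one has $\sigma\big(A_{\pi^{-1}(k),\pi^{-1}(k)}\big)\cup\sigma\big(\A(\pi)_k\big)=\C$; the induction hypothesis applied to the $(k\times k)$ block $\A(\pi)_k$ gives $\sigma(\A(\pi)_k)\subset C(\A(\pi)_k)$, and since each Cassini oval of a sub-block is contained in the corresponding oval of the full matrix (the row sums only increase when passing from the block to $\A$), while $\sigma(A_{ii})\subset C(\A)$ by definition, one concludes $C(\A)=\C$ and the inclusion \eqref{eq:CA} is vacuous. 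In the remaining case $\mathfrak I=\{1,\ldots,n\}$ every index is admissible, so your distinguished $m$ automatically lies in $\mathfrak I$ and your direct estimate finishes the proof. Without this dichotomy (or some substitute handling of Condition~(3)), the argument is incomplete.
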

\begin{proof}
  The proof is by induction. 
 	
	The case $n=2$ follows from Theorem \ref{theo:2by2}. Indeed, by that result and the sub-multiplicativity of the norm
	\begin{align*}
		\sigma(\A) &  \subset  \sigma(A_{11}) \cup \sigma(A_{22}) \\
		& \qquad \cup \{ \lambda \in \C\setminus \big(\sigma(A_{11})\cup \sigma(A_{22})\big): 
			\| A_{12} (\lambda -A_{22})^{-1} A_{21} (\lambda -A_{11})^{-1}\| \geq 1\} \\
		& \subset  \sigma(A_{11}) \cup \sigma(A_{22}) \cup \widetilde{C}_{12}(\A) = C_{12}(\A) \, .
	\end{align*}

	Let now the statement be true for a $n-1\in\N$, $n>2$. 
	Define $\mathfrak I$ as in Corollary \ref{cor:nagel-implies-gershgorin}. 
	It is convenient to separate the cases $\mathfrak I\not=\{1,\ldots,n\}$ and   $\mathfrak I=\{1,\ldots,n\}$.\\
	Assume first that $\mathfrak I\not=\{1,\ldots,n\}$. 
	Then, by assumption, there exists a permutation $\pi$ of $\{1 , ..,n\}$ such that 
	$\sigma(A_{\pi(k),\pi(k)})\cup\sigma(\A(\pi)_k)=\C$ for some $k \in \{1,\ldots, n\}\setminus \{1,n\}$. 
	By induction hypothesis we get 
	\begin{align*}
		\sigma(\A(\pi)_k)&\subset\bigcup_{1 \leq i <j\leq k} C_{ij}(\A(\pi)_k)
				\subset\bigcup_{1 \leq i <j\leq k} C_{\pi^{-1}(i),\pi^{-1}(j)}(\A)	
				\subset\bigcup_{1 \leq i <j\leq n} C_{ij}(\A)\ .
	\end{align*}
	Whereas the inclusion $C_{ij}(\A(\pi)_k) \subset C_{\pi^{-1}(i),\pi^{-1}(j)}(\A)$ holds observing that
	\allowdisplaybreaks{
		\begin{align*}
			&  \hspace{-.35cm}\sum_{\quad l=1,l\not=i}^k\|A_{\pi^{-1}(l),\pi^{-1}(i)}(\lambda-A_{\pi^{-1}(i),\pi^{-1}(i)})^{-1}\| \\
			& \quad \leq
				\hspace{-.35cm} \sum_{\quad l=1,l\not=\pi^{-1}(i)}^n\|A_{l,\pi^{-1}(i)}(\lambda-A_{\pi^{-1}(i),\pi^{-1}(i)})^{-1}\| \, .
		\end{align*}
	}
	By definition of the Cassini ovals, it is clear that $\sigma(A_{\pi(i),\pi(i)})\subset C(\A)$ for any $i \in \{1,\ldots, n\}$. 
	Thus we get 
	\begin{align*}
		\C=\sigma(A_{\pi(k),\pi(k)})\cup\sigma(\A(\pi)_k)\subset \bigcup_{1 \leq i <j\leq n} C_{ij}(\A) = C(\A) ,
	\end{align*}
	and the claim follows trivially.\\
	If $\mathfrak I=\{1,\ldots,n\}$, we consider a $\lambda\in\bigcap_{k=1}^n S_{k}(\A)$ with $\lambda\not\in C (\A)$. 
	We will show that such a $\lambda$ cannot exist. 
	Notice that $\lambda\not\in C (\A)$ implies that $\lambda \not \in \sigma(A_{ii})$ for all $i \in \{1, ..,n\}$. 
	We will use this in the following without further noticing it. 
	Since $\lambda\not\in C (\A)$, by definition $\lambda\not\in \widetilde{C}_{ij}(\A)$ for all $1 \leq i<j\leq n$, 
	which implies for all such indices
		\[
			\Big(\hspace{-.35cm} \sum_{\quad l=1,l\not=i}^n\|A_{li}(\lambda-A_{ii})^{-1}\|\Big)
			\Big(\hspace{-.35cm} \sum_{\quad l=1,l\not=j}^n\|A_{lj}(\lambda-A_{jj})^{-1}\|\Big)<1 . 
		\]
	Hence there exists $m\in\{1,\ldots,n\}$ such that
		\[
			\hspace{-.35cm}\sum_{\quad l=1,l\not=j}^n\|A_{lj}(\lambda-A_{jj})^{-1}\|<1 \qquad \forall j\not=m\ .
		\]
	Consider now $k \ne m$. 
	Then using the notation in~\eqref{eq:Rkj} we see that for $j \ne m,k$
	\begin{align*}
		\mathcal{R}_{kj}(\lambda) & \leq \hspace{-.35cm} \sum_{\quad i=1,i\not=k}^{n}\Big( 
			\|A_{ik}(\lambda-A_{kk})^{-1}\| \|A_{kj}(\lambda-A_{jj})^{-1} \| \Big)+\hspace{-.5cm} 
			\sum_{\quad i=1,i\not=k,j}^{n} \|A_{ij}(\lambda-A_{jj})^{-1}\|\\
		& < \|A_{kj}(\lambda-A_{jj})^{-1} \|+ \hspace{-.5cm} \sum_{\quad i=1,i\not=k,j}^{n} \|A_{ij}(\lambda-A_{jj})^{-1}\| <1 \, .
	\end{align*}
	Hence, $\lambda \not \in S_{kj}(\A)$ for all $j \ne m,k$. 
	On the other hand, $\lambda \in S_k(\A)$ and hence necessarily $\lambda \in S_{km}(\A)$. 
	It then holds
	\begin{align*}
		1&\leq \mathcal{R}_{km}(\lambda) \\
		& \leq \Big(\hspace{-.35cm}\sum_{\quad i=1,i\not=k}^{n}\|A_{ik}(\lambda-A_{kk})^{-1}\|\Big)\|A_{km}(\lambda-A_{mm})^{-1}\|
				+\hspace{-.5cm} \sum_{\quad i=1,i\not=m,k}^{n}\|A_{i m}(\lambda-A_{mm})^{-1}\|	\\
		&<\hspace{-.35cm} \sum_{\quad i=1,i\not=m}^{n}\|A_{i m}(\lambda-A_{mm})^{-1}\|.
	\end{align*}
	Since $\lambda\in S_m(\A)$, there is a $j\not=m$ with $\lambda\in S_{mj}(\A)$ which implies
	\begin{align*}
		1&\leq \mathcal{R}_{mj}(\lambda) \\
		& \leq \hspace{-.35cm} \sum_{\quad i=1,i\not=m}^{n}\|A_{im}(\lambda-A_{mm})^{-1}\| \|A_{mj}(\lambda-A_{jj})^{-1}\|
				+ \hspace{-.5cm} \sum_{\quad i=1,i\not=m,j}^{n}\|A_{ij}(\lambda-A_{jj})^{-1}\|	\\
		& < \Big(\hspace{-.35cm} \sum_{\quad i=1,i\not=m}^{n}\|A_{im}(\lambda-A_{mm})^{-1}\| \Big) 
			\Big( \|A_{mj}(\lambda-A_{jj})^{-1}\|+ 	\hspace{-.5cm} \sum_{\quad i=1,i\not=m,j}^{n}\|A_{ij}(\lambda-A_{jj})^{-1}\| \Big) \\
		& = \Big( \hspace{-.35cm} \sum_{\quad i=1,i\not=m}^{n}\|A_{im}(\lambda-A_{mm})^{-1}\| \Big)  \Big( \hspace{-.35cm} 
			\sum_{\quad i=1,i\not=j}^{n}\|A_{ij}(\lambda-A_{jj})^{-1}\| \Big) <1 \, ,
	\end{align*}
	since $\lambda\not\in \widetilde{C}_{mj}(\A)$, a contradiction. 
	Hence, $\bigcap_{k=1}^n S_{k}(\A)\subset C (\A)$ and the claim follows in this case from Corollary \ref{cor:nagel-implies-gershgorin}.
\end{proof}

We stress that Theorem~\ref{theo:nagel-implies-cassini} is valid without assuming condition~\eqref{restriction} to be satisfied.

\section{Other Results}

%%%%%%%%%%%%%%%%%%%%%%%%%%%%%%%%%%%%%%%%%%%%%%%%%%%%%%%

\subsection{The modified Schur sets}

The Schur sets defined in Definition \ref{defi:S_i} are the natural ones to consider in order to describe the spectrum 
by means of Schur's Lemma. 
However, we are now going to present an alternative localization result based on a new family of sets: 
These are in general bigger then the Schur sets but, as we will see, 
allow for estimates that do not depend on condition~\eqref{restriction}. 
Of course, this is useful only when some entries of the operator matrix might in fact have unbounded spectra.

\begin{defi}\label{defi:S_istar}
	Let $\A=(A_{ij})$ be an $(n\times n)$ operator matrix. 
	Consider for $1\le j,k\le n$, $j\ne k$, the \emph{modified Schur sets}
		\[
			S^*_{kj}(\A)\coloneqq\sigma(A_{kk})\cup\sigma(A_{jj})\cup
				\left\{\lambda\in\C\setminus\big(\sigma(A_{kk})\cup\sigma(A_{jj})\big)\ |\ {\mathcal R}^*_{kj}(\lambda) \geq 1\right\}\ ,
		\]
	where
	\begin{align*}
		{\mathcal R}^*_{kj}(\lambda) & :=
			\hspace{-.35cm} \sum_{\quad i=1,i\not=k}^{n}  \hspace{-.2cm}\Big( \big\|A_{ik}(\lambda-A_{kk})^{-1}A_{kj}(\lambda-A_{jj})^{-1}\big\|+				\big\| (1-\delta_{ij})A_{ij}(\lambda-A_{jj})^{-1} \big\| \Big) ,
	\end{align*}
	as well as
		\[
			S_k^*(\A)\coloneqq \hspace{-.35cm}\bigcup_{\quad j=1,j\not=k}^n	 \hspace{-.15cm} S^*_{kj} (\A)\ .
		\]
\end{defi}
It is clear from the definition that $S_k(\A)\subset S_k^*(\A)$.

\begin{theo}\label{theo:no-restriction}
	Let $\A$ be an $(n \times n)$ operator matrix satisfying Assumptions \ref{Ass1}. 
	Assume that for any permutation $\pi$ of $\{ 1, \dots,n\}$, $\A(\pi)_k$ is closed for any $k \in \{2, ..., n\}$. 
	Then,
	 \begin{align*}
			\sigma(\A)&\subset\bigcap_{k=1}^n S^*_k(\A) \, .
	 \end{align*}
\end{theo}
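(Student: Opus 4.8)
The idea is to adapt the inductive strategy of Theorem~\ref{theo:nagel-implies-cassini}, but to replace the Schur sets $S_k(\A)$ by the modified Schur sets $S_k^*(\A)$, so that the restriction~\eqref{restriction} is never needed. The key observation is that in the proof of Theorem~\ref{theo:nagel-implies-gershgorin}, the only place where~\eqref{restriction} was used is through Lemma~\ref{lem:rel-bound}, i.e.\ in order to guarantee that the lower block-row $C=(A_{nj})_{j=1}^{n-1}$ is relatively $\A_{n-1}$-bounded, which in turn underpins the applicability of Schur's Lemma~\ref{lem:schur}. The plan is therefore first to re-examine that step and show that the hypothesis ``$\A(\pi)_k$ closed for all $k\in\{2,\ldots,n\}$'' already yields enough relative boundedness to run the Schur-complement argument, and then to redo the Neumann-series estimate with the coarser bound defining ${\mathcal R}^*_{kj}$.

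\textbf{Step 1: reduce to a $(2\times2)$ block decomposition.} As in Theorem~\ref{theo:nagel-implies-gershgorin}, I would write $\A=\begin{pmatrix} A & B\\ C & D\end{pmatrix}$ with $A\coloneqq\A_{n-1}$, $B\coloneqq(A_{in})_{i=1}^{n-1}$, $C\coloneqq(A_{nj})_{j=1}^{n-1}$, $D\coloneqq A_{nn}$. Fix $\lambda\notin S^*_n(\A)$; then in particular $\lambda\notin\sigma(A_{nn})=\sigma(D)$ and $\lambda\notin\sigma(A_{ii})$ for $i=1,\ldots,n-1$, so the diagonal operator $L\coloneqq\diag(\lambda-A_{ii})_{i=1}^{n-1}$ is boundedly invertible. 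The delicate point is to justify that $C(\lambda-A)^{-1}$ makes sense as a bounded operator; here I would argue that $\Delta_\lambda=\lambda-A-B(\lambda-D)^{-1}C=[I-R]L$ with $R=(R_{ij})$ as in that earlier proof, and that each entry $R_{ij}=\big(A_{in}(\lambda-A_{nn})^{-1}A_{nj}+(1-\delta_{ij})A_{ij}\big)(\lambda-A_{jj})^{-1}$ is a bounded operator directly from Assumption~\ref{Ass1} (relative boundedness of $A_{nj}$ w.r.t.\ $A_{jj}$, of $A_{ij}$ w.r.t.\ $A_{jj}$, and boundedness of $(\lambda-A_{nn})^{-1}$), \emph{without} invoking Lemma~\ref{lem:rel-bound}. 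This is the real gain: the product $A_{in}(\lambda-A_{nn})^{-1}A_{nj}(\lambda-A_{jj})^{-1}$ and the term $A_{ij}(\lambda-A_{jj})^{-1}$ are individually bounded, which is exactly what the definition of ${\mathcal R}^*_{kj}$ is tailored to exploit.

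\textbf{Step 2: Neumann series with the modified radius.} By Lemma~\ref{lem:sums} applied to the $((n-1)\times(n-1))$ matrix $R$ and the triangle inequality splitting $R_{ij}$ into its two summands, one obtains $\|R\|\le\max_{j=1,\ldots,n-1}{\mathcal R}^*_{nj}(\lambda)<1$, since $\lambda\notin S^*_n(\A)$ means $\lambda\notin S^*_{nj}(\A)$ for every $j\in\{1,\ldots,n-1\}$. Hence $I-R$ is invertible by the Neumann series criterion, so $\Delta_\lambda=[I-R]L$ has a bounded inverse, and Schur's Lemma~\ref{lem:schur} gives $\lambda\notin\sigma(\A)$. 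This proves $\sigma(\A)\subset S^*_n(\A)$. Finally, since the spectrum is permutation-invariant (via $\A(\pi)=P\A P^t$) and each permuted matrix $\A(\pi)$ again satisfies Assumptions~\ref{Ass1} with all $\A(\pi)_k$ closed for $k\in\{2,\ldots,n\}$, the same argument applied to $\A(\pi)$ with $\pi$ chosen so that $\pi(m)=n$ yields $\sigma(\A)\subset S^*_n(\A(\pi))=S^*_m(\A)$ for every $m\in\{1,\ldots,n\}$ — using the analogue of identity~\eqref{eq:auf} for ${\mathcal R}^*$ — and intersecting over $m$ gives the claim. (Here no analogue of condition~(3) from Corollary~\ref{cor:nagel-implies-gershgorin} is needed, precisely because Step~1 avoided Lemma~\ref{lem:rel-bound}.)

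\textbf{Expected main obstacle.} The subtle part is making Step~1 rigorous regarding domains: one must check that $\Delta_\lambda$ really has domain $\mathcal{D}(\A_{n-1})=\mathcal{D}(A_{11})\times\cdots\times\mathcal{D}(A_{n-1,n-1})$ and that the factorisation $\Delta_\lambda=[I-R]L$ is an honest identity of operators there, i.e.\ that $L$ maps $\mathcal{D}(\A_{n-1})$ onto $X_1\times\cdots\times X_{n-1}$ and that $R$ is then a genuinely bounded operator on that product space so that $[I-R]L$ has the same domain as $L$. Closedness of $\A_{n-1}$ is what is needed to ensure Schur's Lemma~\ref{lem:schur} applies to the block matrix $\begin{pmatrix}A&B\\C&D\end{pmatrix}=\begin{pmatrix}\A_{n-1}&B\\C&A_{nn}\end{pmatrix}$ with $C$ relatively $A$-bounded — and relative $A$-boundedness of $C$ here follows not from Lemma~\ref{lem:rel-bound} but simply because, for the fixed $\lambda$ at hand, $C(\lambda-A)^{-1}=C\,L^{-1}\cdot L(\lambda-A)^{-1}$ and one shows $C L^{-1}$ bounded entrywise while $L(\lambda-A)^{-1}$ is bounded by closedness of $A$; this argument has to be presented carefully, as it is the crux of why~\eqref{restriction} can be dropped.
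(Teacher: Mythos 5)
There is a genuine gap, and it sits exactly where you flag the ``expected main obstacle''. To apply Schur's Lemma~\ref{lem:schur} to the decomposition with $A=\A_{n-1}$ you must know that $C=(A_{nj})_{j=1}^{n-1}$ is relatively $\A_{n-1}$-bounded, and your argument for this is circular: the factorisation $C(\lambda-A)^{-1}=CL^{-1}\cdot L(\lambda-A)^{-1}$ presupposes $\lambda\notin\sigma(\A_{n-1})$ --- equivalently, via the criterion of \cite[Exer.\ III.2.18.1]{EngNag00}, that $\A_{n-1}$ has non-empty resolvent set. But $\lambda\notin S^*_n(\A)$ gives no control whatsoever on $\sigma(\A_{n-1})$, and the degenerate situation $\sigma(A_{kk})\cup\sigma(\A_k)=\C$ for some intermediate $k$ is precisely what condition~\eqref{restriction} excludes and what this theorem is designed to tolerate; so Step~1 silently re-imports~\eqref{restriction}. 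A telling symptom: your argument never uses the feature that distinguishes ${\mathcal R}^*_{kj}$ from ${\mathcal R}_{kj}$ --- Step~2 only applies the triangle inequality to pass from the sharper quantity to the coarser one --- so if your proof were correct it would yield $\sigma(\A)\subset\bigcap_k S_k(\A)$ with the \emph{unmodified} Schur sets and no condition~\eqref{restriction}, making Definition~\ref{defi:S_istar} and condition (3) of Corollary~\ref{cor:nagel-implies-gershgorin} pointless. That should have been a warning sign.

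The paper's proof goes a completely different way: it distinguishes two cases. If~\eqref{restriction} holds, Theorem~\ref{theo:nagel-implies-gershgorin} gives $\sigma(\A)\subset S_n(\A)\subset S^*_n(\A)$ and nothing more is needed. If~\eqref{restriction} fails, one shows that $S^*_n(\A)=\C$, so the inclusion is trivial: by Theorem~\ref{theo:nagel-implies-cassini} (which requires no condition~\eqref{restriction}) one has $\sigma(A_{kk})\cup\sigma(\A_k)\subset C(\A_{n-1})\cup\bigcup_i\sigma(A_{ii})$ for $2\le k\le n-1$, and every Cassini oval of $\A_{n-1}$ lies in $S^*_n(\A)$ because ${\mathcal R}^*_{ni}(\lambda)\ge\sum_{l\ne i}\|A_{li}(\lambda-A_{ii})^{-1}\|$. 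This last inequality is exactly where the term-by-term splitting in ${\mathcal R}^*$ is indispensable: the combined norm in ${\mathcal R}_{ni}$ could be small by cancellation even when each summand is large. If you insist on a direct route, the relative $\A_{n-1}$-boundedness of $C$ can in fact be obtained without~\eqref{restriction}: $\mathcal{D}(\A_{n-1})$ coincides with $\mathcal{D}(\diag(A_{ii})_{i=1}^{n-1})$, both graph norms make it a Banach space (the first by the closedness hypothesis), the diagonal graph norm dominates the full one by Assumption~\ref{Ass1}, and the bounded inverse theorem then gives equivalence of the two graph norms, whence $\|A_{nj}x_j\|_{X_n}\le c\,\|x\|_{\mathcal{D}(\A_{n-1})}$. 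But that is a different argument from the one you proposed, and you would still have to supply it.
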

\begin{proof}
	We first show that $\sigma(\A) \subset S^*_n(\A)$. 
	Since $S_n(\A)\subset S^*_n(\A)$, the inclusion follows from Theorem \ref{theo:nagel-implies-gershgorin} once we have shown that
	\begin{align}\label{eq:rest}
		\big(\sigma(A_{kk})\cup\sigma(\A_k) \big)\subset S^*_n(\A) \mbox{ for all }k \in \{2, ..,n-1\}\,.
	\end{align}
	In fact, from this one sees that if \eqref{restriction} is false, then $S_n^*(\A)=\C$ and $\sigma(\A)\subset S^*_n(\A)$ is then trivial.\\
	Instead of proving \eqref{eq:rest} directly, we first observe that by Theorem \ref{theo:nagel-implies-cassini} 
	and the definition of the Cassini ovals one has for $2 \leq k \leq n-1$
	\begin{align*}
		\sigma(\A_k)\subset\bigcup_{1 \leq i <j\leq k}C_{ij}(\A_{k})&\subset\bigcup_{1 \leq i <j\leq n-1}C_{ij}(\A_{n-1}) = C(\A_{n-1})\ .
	\end{align*}
	and hence
	\begin{align*}
		\big(\sigma(A_{kk})\cup\sigma(\A_k) \big)&\subset C(\A_{n-1})\mbox{ for all }k \in \{2, \ldots,n-1\}\, .
	\end{align*}  
	As by definition $\sigma(A_{kk})\subset S^*_n(\A)$ for $k=\{1,\ldots,n\}$, 
	we consider now $\lambda\in C(\A_{n-1})$ with $\lambda\not\in\sigma(A_{kk})$ for all $k=\{1,\ldots,n\}$ 
	and prove that $\lambda \in S_n^*(\A)$. 
	Since $\lambda \in  C(\A_{n-1})$, there exist indices $1 \leq i < j \leq n-1$ such that $\lambda\in \widetilde{C}_{ij}(\A)$, i.e.,
	\begin{align*}
		\Big( \hspace{-.35cm} \sum_{\quad l=1,l\not=i}^{n-1}\|A_{li}(\lambda-A_{ii})^{-1}\|\Big)
			\Big(\hspace{-.35cm} \sum_{\quad l=1,l\not=j}^{n-1}\|A_{lj}(\lambda-A_{jj})^{-1}\|\Big)\geq 1.
	\end{align*}
	Without loss in generality we may say that%Without limitation we conclude 
	\begin{align}\label{eq:concon}
		\sum_{\quad l=1,l\not=i}^{n-1}\|A_{li}(\lambda-A_{ii})^{-1}\| \geq 1.
	\end{align}
	Since
	\begin{align*}
		\mathcal{R}^*_{ni}(\lambda) & \geq \sum_{l=1}^{n-1} \big\| (1-\delta_{li})A_{li}(\lambda-A_{ii})^{-1} \big\| \, ,
	\end{align*}
	the inequality in \eqref{eq:concon} implies $\mathcal{R}^*_{ni}(\lambda)\geq 1$ and consequently $\lambda\in S^*_n(\A)$.\\ 
	With the same arguments, considering permutations $\pi$ and the matrices $\A(\pi)$, 
	one finds $\sigma(\A) \subset S_k^*(\A)$ for all $k \in \{1, .., n\}$. 
	The claim follows.
\end{proof}

%%%%%%%%%%%%%%%%%%%%%%%%%%%%%%%%%%%%%%%%%%%%%%%%%%%%%%%%%%%%%%%%%%%%%%%%%%%%%%%%%

\subsection{A convenient set of assumptions}

In the main results we have to assume that $\A(\pi)_k$ is closed for all $k \in \{ 2, ..,n\}$ and any permutation $\pi$ of $\{ 1, ..,n\}$. We give now a set of assumptions on the off-diagonal operators $A_{ij}$ that assures the closedness of the upper-left blocks $\A(\pi)_k$.

\begin{assum}\label{Ass2}
	Let $\A$ be an $(n \times n)$ operator matrix satisfying Assumption \ref{Ass1}. 
	For $i,j \in \{1, ..,n\}$, $i \ne j$, there exists non-negative constants $c_{ij}$, $d_{ij}$ such that  
	\begin{equation}\label{eq:bs1}
		\| A_{ij} x_{j}\|_{X_i} \leq c_{ij} \| A_{jj} x_{j}\|_{X_j} + d_{ij} \| x_{j}\|_{X_j}\, ,
	\end{equation}
	for all $x_j \in \mathcal{D}(A_{jj})$. 
	Moreover, 
	\begin{equation}\label{eq:bs2}
		\sum_{\quad i=1,i \ne j}^n \hspace{-.1cm}c_{ij} <1 \, . 
	\end{equation}
\end{assum}

\begin{lema}\label{lem:closed}
	Let $\A$ be an $(n \times n)$ operator matrix satisfying Assumptions \ref{Ass1} and \ref{Ass2} 
	and $\pi$ be a permutation of $\{1, ..,n\}$. 
	Then $\A(\pi)_k$ is closed on $\mathcal{D}(\A(\pi)_k)$ for all $k \in \{1, ..,n\}$. 
\end{lema}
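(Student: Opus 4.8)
The plan is to exhibit each upper-left block $\A(\pi)_k$ as a relatively bounded perturbation, with relative bound strictly less than $1$, of a block-diagonal operator built from the closed operators $A_{ii}$, and then to conclude by a graph-norm comparison. Fix a permutation $\pi$ and $k\in\{1,\dots,n\}$ and put $S\coloneqq\{\pi^{-1}(1),\dots,\pi^{-1}(k)\}$, so that $\A(\pi)_k$ acts on $\prod_{p\in S}X_p$ (with the $1$-norm), has $(p,q)$-entry $A_{pq}$ for $p,q\in S$, and $\mathcal{D}(\A(\pi)_k)=\prod_{p\in S}\mathcal{D}(A_{pp})$. I would write $\A(\pi)_k=\mathcal{D}_S+P_S$, where $\mathcal{D}_S\coloneqq\diag(A_{pp})_{p\in S}$ and $P_S\coloneqq\A(\pi)_k-\mathcal{D}_S$ collects the off-diagonal entries, i.e.\ $(P_S x)_p=\sum_{q\in S,\,q\ne p}A_{pq}x_q$; both are defined on $\mathcal{D}(\mathcal{D}_S)=\mathcal{D}(\A(\pi)_k)$, since each $A_{pq}$ with $p\ne q$ is defined on $\mathcal{D}(A_{qq})$ by Assumption~\ref{Ass1}.

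First I would check that $\mathcal{D}_S$ is closed: if $x^{(m)}\to x$ and $\mathcal{D}_S x^{(m)}\to y$ in the $1$-norm, then $x^{(m)}_p\to x_p$ and $A_{pp}x^{(m)}_p\to y_p$ for each $p\in S$, and closedness of $A_{pp}$ gives $x_p\in\mathcal{D}(A_{pp})$ with $A_{pp}x_p=y_p$; hence $x\in\mathcal{D}(\mathcal{D}_S)$ and $\mathcal{D}_S x=y$. Next, for $x=(x_p)_{p\in S}\in\mathcal{D}(\mathcal{D}_S)$ I would estimate, using the $1$-norm, the triangle inequality, the bound \eqref{eq:bs1}, and an interchange of the order of summation,
\begin{align*}
	\|P_S x\| &\leq \sum_{p\in S}\ \sum_{q\in S,\,q\ne p}\|A_{pq}x_q\|_{X_p}\\
		&\leq \sum_{q\in S}\Big(\sum_{p\in S,\,p\ne q}c_{pq}\Big)\|A_{qq}x_q\|_{X_q}
			+\sum_{q\in S}\Big(\sum_{p\in S,\,p\ne q}d_{pq}\Big)\|x_q\|_{X_q}\\
		&\leq c\,\|\mathcal{D}_S x\|+d\,\|x\|,
\end{align*}
where $c\coloneqq\max_{q}\sum_{p=1,\,p\ne q}^n c_{pq}$ and $d\coloneqq\max_{q}\sum_{p=1,\,p\ne q}^n d_{pq}$; by \eqref{eq:bs2} we have $c<1$. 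The key point here — and essentially the only place where something could go wrong — is that passing to the sub-block indexed by $S$ (and permuting) only drops non-negative terms from the column sums $\sum_{p\ne q}c_{pq}$, so the relative bound stays below $1$ uniformly in $k$ and $\pi$; no further structure of $\pi$ is needed.

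Finally I would deduce closedness of $\A(\pi)_k=\mathcal{D}_S+P_S$ from this estimate. For $x\in\mathcal{D}(\mathcal{D}_S)$ one has
\begin{align*}
	(1-c)\,\|\mathcal{D}_S x\|-d\,\|x\|\ \leq\ \|\A(\pi)_k x\|\ \leq\ (1+c)\,\|\mathcal{D}_S x\|+d\,\|x\|,
\end{align*}
so, since $1-c>0$, the graph norm $x\mapsto\|x\|+\|\A(\pi)_k x\|$ is equivalent to $x\mapsto\|x\|+\|\mathcal{D}_S x\|$ on $\mathcal{D}(\mathcal{D}_S)$. As $\mathcal{D}_S$ is closed, $\mathcal{D}(\mathcal{D}_S)$ is complete under the latter, hence also under the former, i.e.\ $\A(\pi)_k$ is closed on $\mathcal{D}(\A(\pi)_k)$; equivalently, one may invoke the standard stability of closedness under relatively bounded perturbations of relative bound $<1$. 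Since $\pi$ and $k$ were arbitrary, the claim follows. I expect no genuine obstacle beyond bookkeeping — the only substantive check is the $c<1$ bound surviving restriction to sub-blocks, handled above.
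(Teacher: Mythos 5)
Your proof is correct and follows essentially the same route as the paper's: decompose $\A(\pi)_k$ into its closed diagonal part plus the off-diagonal part, use \eqref{eq:bs1} and \eqref{eq:bs2} to get a relative bound $c<1$ (noting that restricting to a sub-block only shrinks the column sums), and conclude by equivalence of the graph norms. The only cosmetic difference is that you phrase the conclusion explicitly as stability of closedness under a relatively bounded perturbation, whereas the paper writes out the two-sided graph-norm estimates directly; the content is identical.
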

\begin{proof}
	Let $k \in \{1,\ldots,n\}\setminus \{1,n\}$ be fixed. 
	We consider first the case that $\pi$ is the identity. 
	Let $D:= (\delta_{ij} A_{ij})_{i,j=1}^k$ be the $(k \times k)$ operator matrix 
		$$D: \mathcal{D}(D) \subset X_1 \times .. \times X_{k} \to X_1 \times .. \times X_{k}\, ,$$ 
	with $\mathcal{D}(D)= \mathcal{D}(\A_{k})$. 
	We first observe that $D$ is closed. Indeed, for $x \in\mathcal{D}(D)$ we find
	\begin{align*}
		\| x \|_{\mathcal{D}(D)} = \sum_{i=1}^k \| x_i \|_{X_i} + \sum_{i=1}^k \| A_{ii}x_i \|_{X_i}  = \sum_{i=1}^k \| x_i \|_{D(A_{ii})} \, .
	\end{align*}
	Since $A_{ii}$ are closed, each $\mathcal{D}(A_{ii})$ is a Banach space and hence so is $\mathcal{D}(D)$. 
	It follows that $D$ is closed.\\
	We prove that $\A_k$ is a closed operator by proving that the induced graph-norm is equivalent to $\| \cdot \|_{\mathcal{D}(D)}$. 
	Notice that $\mathcal{D}(D)= \mathcal{D}(\A_{k})$. 
	For $x \in \mathcal{D}(\A_k)$ we find by Assumptions \ref{Ass2}
	\begin{align*}
		\| x\|_{\mathcal{D}(\A_k)} & \leq  \sum_{i=1}^k \| x_i \|_{X_i} + \sum_{i,j=1}^k \| A_{ij}x_j \|_{X_i} \\
		& \leq \sum_{i=1}^k \| x_i \|_{X_i} + \sum_{j=1}^k \Big( \| A_{jj}x_j \|_{X_j} + 
			\sum_{\substack{i=1\\i \ne j}}^k \| A_{ij}x_j \|_{X_i} \Big) \\
		& \leq \sum_{i=1}^k \Big(  1+ \hspace{-.35cm}  \sum_{\quad j=1,j \ne i}^k \hspace{-.15cm} d_{ji} \Big)   \| x_i \|_{X_i} + 
			\sum_{j=1}^k \Big(  1+\hspace{-.35cm}  \sum_{\quad j=1,j \ne i}^k \hspace{-.15cm} c_{ij} \Big) \| A_{jj}x_j \|_{X_j}  \\
		& \leq C_1 \| x \|_{\mathcal{D}(D)} \, ,
	\end{align*}
	with a strict positive constant $C_1$ depending only on the coefficients $c_{ij}, d_{ij}$ in Assumptions \ref{Ass2}. 
	Similarly, we have the following estimate from below. 
	Define
		$$ \delta:= 1 + \max_{i=1, ..,k} \hspace{-.35cm}  \sum_{\quad j=1,j \ne i}^k \hspace{-.1cm}  d_{ji} \, .$$
	Then, 
	\allowdisplaybreaks{
		\begin{align*} 
			& \delta  \sum_{i=1}^k \| x_i \|_{X_i} + \sum_{i=1}^k \| \sum_{j=1}^k A_{ij}x_j \|_{X_i} \\
			& \geq \delta \sum_{i=1}^k \| x_i \|_{X_i} + \sum_{i=1}^k \Big( \| A_{ii}x_i \|_{X_i} - \hspace{-.35cm}  
				\sum_{\quad j=1,j \ne i}^k \hspace{-.15cm} \| A_{ij}x_j \|_{X_i} \Big) \\
			& \geq \sum_{i=1}^k \Big(  \delta - \hspace{-.35cm}  \sum_{\quad j=1,j \ne i}^k \hspace{-.15cm} d_{ji} \Big)  \| x_i \|_{X_i} + 
				\sum_{j=1}^k \Big(  1 - \hspace{-.35cm}  \sum_{\quad j=1,j \ne i}^k \hspace{-.15cm} c_{ij} \Big) \| A_{jj}x_j \|_{X_j}   
				\geq C_2 \| x \|_{\mathcal{D}(D)} \, ,
		\end{align*}
	}
	with 
		$$C_2:= \min_{j=1, ..,k} \Big(1-\hspace{-.35cm}  \sum_{\quad j=1,j \ne i}^k \hspace{-.15cm} c_{ij}\Big) >0 \, ,$$
	by assumption. 
	Since
		$$\| x\|_{\mathcal{D}(\A_k)} \geq \frac{1}{\delta} \Big( \delta  \sum_{i=1}^k \| x_i \|_{X_i} + 
			\sum_{i=1}^k \| \sum_{j=1}^k A_{ij}x_j \|_{X_i} \Big)\, , $$
	we see that  $\| \cdot \|_{\mathcal{D}(\A_k)}$ and  $\| \cdot \|_{\mathcal{D}(D)}$ are equivalent 
	and hence $\A_k$ is closed on its domain.

	When $\pi$ is a general permutation of $\{1 , .., n\}$, the claim follows with the same arguments since in this case 
	the elements in the diagonal of $\A(\pi)_k$ are also closed operators and \eqref{eq:bs1}, \eqref{eq:bs2} still hold for $\A(\pi)$, too.   
\end{proof}

Accordingly, we can state a weaker but simple version of our main result.

\begin{theo}
	Let $\A$ be an $(n \times n)$ operator matrix satisfying Assumptions \ref{Ass1} and \ref{Ass2}.
	Let $\mathfrak I_0$ be the set of those $m\in \{1,\ldots,n\}$ such that 
	there is a permutation $\pi$ of the set $\{1,\ldots,n\}$ for which 
	\begin{enumerate}
		\item $\pi(m)=n$;
		\item $\sigma\big(A_{\pi^{-1}(k),\pi^{-1}(k)}\big)\cup\sigma\big(\A(\pi)_k\big)\not=\C$ for all $k\in\{2,\ldots,n-1\}$.
	\end{enumerate}
	Then
	\begin{align*}
		\sigma(\A)\subset \bigcap_{m\in \mathfrak I_0} S_m(\A)\ .
	\end{align*}
\end{theo}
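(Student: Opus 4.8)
The plan is to deduce the statement directly from Corollary~\ref{cor:nagel-implies-gershgorin} by checking that the closedness requirement built into the index set $\mathfrak I$ of that corollary is automatic under Assumption~\ref{Ass2}. Recall that $\mathfrak I$ consists of those $m$ for which some permutation $\pi$ satisfies simultaneously: (1) $\pi(m)=n$; (2) $\A(\pi)_k$ is closed for all $k\in\{1,\ldots,n\}\setminus\{1,n\}$; and (3) $\sigma(A_{\pi^{-1}(k),\pi^{-1}(k)})\cup\sigma(\A(\pi)_k)\neq\C$ for all such $k$. The set $\mathfrak I_0$ in the present theorem is defined by exactly conditions (1) and (3) above (its condition (2), with $k$ ranging over $\{2,\ldots,n-1\}=\{1,\ldots,n\}\setminus\{1,n\}$, coincides with condition (3) of $\mathfrak I$), while condition (2) of $\mathfrak I$ is dropped.

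First I would invoke Lemma~\ref{lem:closed}: since $\A$ satisfies Assumptions~\ref{Ass1} and~\ref{Ass2}, the block $\A(\pi)_k$ is closed for \emph{every} permutation $\pi$ and every $k\in\{1,\ldots,n\}$, hence in particular for $k\in\{1,\ldots,n\}\setminus\{1,n\}$. Consequently, whenever a permutation $\pi$ witnesses $m\in\mathfrak I_0$ (i.e.\ meets conditions (1)–(2) of the theorem), the very same $\pi$ also meets condition (2) of $\mathfrak I$ for free, and therefore witnesses $m\in\mathfrak I$. This gives $\mathfrak I_0\subset\mathfrak I$ (indeed, the converse inclusion is trivial, so $\mathfrak I_0=\mathfrak I$ here). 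Combining with Corollary~\ref{cor:nagel-implies-gershgorin}, which states $\sigma(\A)\subset\bigcap_{m\in\mathfrak I}S_m(\A)$, and noting that intersecting over the smaller index set $\mathfrak I_0$ can only enlarge the intersection, we obtain
\[
	\sigma(\A)\subset\bigcap_{m\in\mathfrak I}S_m(\A)\subset\bigcap_{m\in\mathfrak I_0}S_m(\A),
\]
which is the claim; if $\mathfrak I_0=\emptyset$ the right-hand side is all of $\C$ and the assertion is vacuous.

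There is no genuine obstacle here: the only points needing care are the bookkeeping that conditions (1)–(2) of $\mathfrak I_0$ are precisely conditions (1) and (3) of $\mathfrak I$, and the direction of the inclusion of intersections (passing to the smaller index set weakens, rather than sharpens, the localisation). Alternatively, one may bypass $\mathfrak I$ altogether: for each $m\in\mathfrak I_0$ with witnessing permutation $\pi$, the matrix $\A(\pi)$ has all upper-left blocks closed by Lemma~\ref{lem:closed} and satisfies \eqref{restriction} by condition (2), so Theorem~\ref{theo:nagel-implies-gershgorin} applies and yields $\sigma(\A)=\sigma(\A(\pi))\subset S_n(\A(\pi))=S_m(\A)$ by the identity~\eqref{eq:auf}; intersecting over $m\in\mathfrak I_0$ gives the result.
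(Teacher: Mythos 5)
Your proof is correct and follows exactly the paper's own route: the paper likewise deduces the statement from Corollary~\ref{cor:nagel-implies-gershgorin} together with Lemma~\ref{lem:closed}, which makes the closedness condition in the definition of $\mathfrak I$ automatic so that $\mathfrak I_0=\mathfrak I$. Your write-up just makes the bookkeeping explicit.
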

\begin{proof}
	The claim follows from Corollary \ref{cor:nagel-implies-gershgorin} and Lemma \ref{lem:closed}.
\end{proof}

\begin{theo}
	Let $\A$ be an $(n \times n)$ operator matrix satisfying Assumptions \ref{Ass1} and \ref{Ass2}. Then 
	\begin{align*}
		\sigma(\A) \subset C(\A) \mbox{ and }\sigma(\A) \subset \bigcap_{k=1}^n S^*_k(\A) \, .
	\end{align*}
\end{theo}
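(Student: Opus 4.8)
The plan is to observe that this statement is a direct corollary of the two main inclusion theorems of the previous two subsections, once the hypothesis on closedness of the upper-left blocks is supplied by Lemma~\ref{lem:closed}. So the whole proof reduces to checking that Assumptions~\ref{Ass1} and~\ref{Ass2} together imply precisely the closedness conditions required by Theorem~\ref{theo:nagel-implies-cassini} and by Theorem~\ref{theo:no-restriction}.

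First I would invoke Lemma~\ref{lem:closed}: under Assumptions~\ref{Ass1} and~\ref{Ass2}, for \emph{every} permutation $\pi$ of $\{1,\ldots,n\}$ and \emph{every} $k\in\{1,\ldots,n\}$, the upper-left block $\A(\pi)_k$ is closed on its domain $\mathcal{D}(\A(\pi)_k)$. In particular this covers the range $k\in\{1,\ldots,n\}\setminus\{1,n\}$ needed in Theorem~\ref{theo:nagel-implies-cassini}, and the range $k\in\{2,\ldots,n\}$ needed in Theorem~\ref{theo:no-restriction}. Since $\A$ trivially satisfies Assumptions~\ref{Ass1} by hypothesis, both theorems now apply verbatim. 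Applying Theorem~\ref{theo:nagel-implies-cassini} yields $\sigma(\A)\subset C(\A)$, and applying Theorem~\ref{theo:no-restriction} yields $\sigma(\A)\subset\bigcap_{k=1}^n S^*_k(\A)$, which is exactly the claim.

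There is no genuine obstacle here: the content of the statement has already been done in Lemma~\ref{lem:closed} (the closedness bookkeeping, which is where the quantitative hypothesis~\eqref{eq:bs2} is actually used) and in Theorems~\ref{theo:nagel-implies-cassini} and~\ref{theo:no-restriction} (the spectral localisation, valid without any restriction of the type~\eqref{restriction}). The only thing worth stressing in the write-up is that Assumption~\ref{Ass2} is a \emph{convenient sufficient} condition: it makes \emph{all} permuted blocks closed simultaneously, which is precisely the uniform-in-$\pi$ hypothesis that both Theorem~\ref{theo:nagel-implies-cassini} and Theorem~\ref{theo:no-restriction} demand, so no case distinction on $\mathfrak I$ or $\mathfrak I_0$ is needed. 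Hence the proof is the single sentence: combine Lemma~\ref{lem:closed} with Theorems~\ref{theo:nagel-implies-cassini} and~\ref{theo:no-restriction}.
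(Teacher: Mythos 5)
Your proposal is correct and matches the paper's own proof exactly: Lemma~\ref{lem:closed} supplies the closedness of all permuted upper-left blocks $\A(\pi)_k$, after which Theorems~\ref{theo:nagel-implies-cassini} and~\ref{theo:no-restriction} apply directly. No gap to report.
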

\begin{proof}
	By Lemma \ref{lem:closed}, the assumptions of Theorem \ref{theo:nagel-implies-cassini} and 
	Theorem \ref{theo:no-restriction} are satisfied and these two results yield the claim.
\end{proof}

\section{Scalar Matrices}

For a scalar matrix $\A\in\C^{n\times n}$ the assumptions of Theorem \ref{theo:nagel-implies-gershgorin} 
-- and in particular condition~\eqref{restriction} -- 
are always fulfilled. 
Furthermore the set $S_{kj}(\A)$ can be simplified to
\begin{align*}
	S_{kj}(\A)\equiv \big\{\lambda\in\C\ \big\vert\ 
		\hspace{-.35cm}	\sum_{\quad i=1, i\ne k}^n |a_{ik}a_{kj}+(1-\delta_{ij})(\lambda-a_{kk})a_{ij}|\geq
		|\lambda-a_{jj}||\lambda-a_{kk}|\big\}\ .
\end{align*}
We know from Theorem~\ref{theo:nagel-implies-cassini} that our Schur sets are included in the Cassini ovals. 
The following example shows that the estimate derived in the previous section is \emph{strictly} better than 
that based on the Cassini ovals. 
On the other hand, the method based on Cassini ovals is computationally less intensive, see Remark \ref{rem:nn-1} below.

\begin{exa} Consider the matrix
	\begin{align*}
			\A=\begin{pmatrix}1 & 1 &1\\
						1 & 1 & 1\\
						1 & 1 & 1
					\end{pmatrix}
	\end{align*} whose eigenvalues are notoriously $0$ and $3$. 
	With the Schur sets we get the inclusion
		$$ \sigma(\A) \subset [0,3] \, ,$$
	according to
		$$\mathcal{R}_{kj}(\lambda)=|(\lambda-1)^{-2}+(\lambda-1)^{-1}|+|\lambda-1|^{-2}\geq1 \Leftrightarrow |\lambda|+1\geq|\lambda-1|^2,$$
	for $k\not=j\in\{1,2,3\}$.
	%The two eigenvalues of $P$ are $0,3$, i.e., they lie exactly at the boundary of the interval we get. 
	Instead, the Cassini method gives the inclusion
		\[
			|\lambda-1|^2\leq 1
		\]
	and therefore 
		$$\sigma(\mathbb A)\subset [-1,3]\ .$$
	The same estimate is yielded by Gershgorin's method.
\end{exa}

\begin{exa}
	Consider the matrix 
	\begin{align*}
			\A=\begin{pmatrix}2.3 &-1.6 &-0.8 &1\\
						-1.6 &3.3 &-0.7 &0.8\\
						-0.8 &-0.7 &1.1 &-0.3\\
						1 &0.8 &-0.3 &8.1 \end{pmatrix} \ .
	\end{align*}
	Even though the eigenvalues can be computed by hand, the explicit expressions are quite lengthy 
	and so we did the following computation using \emph{MATLAB R2009b}. 
	The eigenvalues of $\A$ are $-0.01..$, $1.97..$, $4.47..$ and  $8.36..$. 
	As the matrix $\A$ is hermitian, we know $\sigma(\A)\subset\R$. 
	This observation together with Corollary \ref{cor:nagel-implies-gershgorin} yields the inclusion
		\[
			\sigma(\A)\subset\lbrack-0.33\ldots,4.53\ldots\rbrack\cup\lbrack7.45\ldots,8.40\ldots\rbrack\  .
		\] 
	This example is interesting since while we get two disjoint intervals, 
	Ostrowski's method based on the Cassini ovals imply the localization 
	\[
		\sigma(\A)\subset\lbrack-0.84\ldots,9.20\ldots\rbrack\ .
	\]
\end{exa}

\begin{rema}\label{rem:nn-1}
	For an $(n \times n)$ scalar matrix, in order to determine the set $S_{k}(\A)$, 
	we have to solve the inequalities $\mathcal R_{kj}\ge 1$, $j \ne k$ and take the union of the sets of solutions. 
	By Corollary~\ref{cor:nagel-implies-gershgorin} we have then to take the intersection of the $n$ sets $S_k(\A)$. 
	Thus, our methods allows for a localization of the spectrum of $\A$ by solving a total of $n(n-1)$ inequalities. 
	Admittedly, Cassini is computationally less expensive: 
	In order to determine the set $C(\A)$, one has to solve $\frac12 n(n-1)$ inequalities.
\end{rema}

\bibliographystyle{alpha}

\end{document}